\def\@seccntformat#1{%
  \protect\textup{\protect\@secnumfont
    \ifnum\pdfstrcmp{subsection}{#1}=0 \bfseries\fi
    \csname the#1\endcsname
    \protect\@secnumpunct
  }%
}
\newcommand{\R}{\mathbb{R}}
\def\ip<#1>{\left\langle{#1}\right\rangle}
\newtheorem{theorem}{Theorem}[section]
\newtheorem{lemma}[theorem]{Lemma}
\newtheorem{corollary}[theorem]{Corollary}
\theoremstyle{definition}	
\newtheorem{definition}{Definition}[section]
\title[]{A sparse spectral method for Volterra integral equations using orthogonal polynomials on the triangle}
\author[T.\ S.\ Gutleb]{Timon~S.~Gutleb$^*$}\thanks{$^*$Department of Mathematics, Imperial College London, UK. (t.gutleb18@imperial.ac.uk)}
\author[S.\ \ Olver]{Sheehan~Olver$^\ddag$}\thanks{$^\ddag$Department of Mathematics, Imperial College London, UK. (s.olver@imperial.ac.uk)}
\date{\today}
\begin{document}

\begin{abstract}
    We introduce and analyse a sparse spectral method for the solution of Volterra integral equations using bivariate orthogonal polynomials on a triangle domain. The sparsity of the Volterra operator on a weighted Jacobi basis is used to achieve high efficiency and exponential convergence. The discussion is followed by a demonstration of the method on example Volterra integral equations of the first and second kind with known analytic solutions as well as an application-oriented numerical experiment. We prove convergence for both first and second kind problems, where the former builds on connections with Toeplitz operators. 
\end{abstract}

\maketitle
\thispagestyle{fancy}

\section{Introduction}

Define the \emph{Volterra integral operator}
\begin{equation}\label{eq:volterraintegral}
    (\mathcal{V}_K u)(x) := \int_0^{l(x)} K(x,y) u(y) \mathrm{d}y,
\end{equation}
where $K(x,y)$ is called the kernel, $u(y)$ is a given function of one variable and the limits of integration are either $l(x)=x$ or $l(x)=1-x$. This paper concerns Volterra integral equations of the first and second kind, that is, to find $u$ satisfying
\begin{equation*}
\mathcal{V}_K u = g \qquad\hbox{or}\qquad  ( I + \mathcal{V}_K) u = g.
\end{equation*}
Numerous applications and the fundamental nature of Volterra integral and integro-differential equations motivate research into efficient and accurate numerical solvers. Various forms of Volterra integral equations are analytically well-understood \cite{brunner_2017,pruss_evolutionary_2012,wazwaz_linear_2011}, have been the subject of various numerical approximation schemes \cite{brunner_2017,brunner_2004,babolian_direct_2008,maleknejad_numerical_2005}, and are encountered regularly in various scientific fields as well as engineering and finance applications \cite{brunner_2017,pruss_evolutionary_2012,van_den_bosch_pandemics_1999,wazwaz_linear_2011,krimer_non-markovian_2014,krimer_sustained_2016}.

 In this paper we present a method to compute Volterra integrals and solve Volterra integral equations by using orthogonal polynomials on a triangle domain \cite{dunkl_orthogonal_2014,olver_sparse_2019} to both resolve the kernel and to reduce the equations to banded linear systems. The method is in the same spirit as some previous contributions to the field of numerical Volterra, Fredholm, singular integral and differential equations based on operators and orthogonal polynomials such as \cite{akyuz-dascioglu_chebyshev_2006,koroglu_chebyshev_1998,slevinsky_singular_2017,hale_ultraspherical_2018} but differs  in choice of basis and domain, leading to operator bandedness properties which can be exploited for significantly increased efficiency. Notably the approach introduced in this paper can be used for a wider range of kernels than many other Volterra integral equation solvers such as the methods based on orthogonal polynomials due to Loureiro and Xu \cite{loureiro_volterra-type_2019,xu_spectral_2018}, the recently developed ultraspherical spectral method in \cite{hale_ultraspherical_2018} or the Fourier extension method in \cite{xu_fast_2017} as it is not limited to convolution kernel cases, that is kernels of the form $K(x,y) = K(x-y)$, but works for a wider class of kernels.

The sections in this paper are organized as follows: Section \ref{sec:functionapproximation} introduces the required aspects of univariate and bivariate polynomial function approximation on a real interval and the triangle respectively. Section \ref{sec:volterra} introduces an efficient numerical method for  Volterra integrals and integral equations and discusses how to approach kernel computations using a multivariate variant of Clenshaw's algorithm. In Section \ref{sec:numericalexamples} we show the scheme in action in both toy and application-based examples. Proofs of convergence for well-posed problems are discussed in Section \ref{sec:analysis}.
\section{Function approximation with orthogonal polynomials} \label{sec:functionapproximation}
\subsection{Jacobi polynomials on the real interval} \label{sec:jacobirealinterval}
Multivariate orthogonal polynomials are ordered sets of polynomials satisfying a particular pair-wise and weighted orthogonality condition, often of the form 
\begin{equation} \label{eq:innerprod}
\ip<P_{m,k},P_{n,j}> = \int_\Omega P_{m,k}(\mathbf{x}) P_{n,j}(\mathbf{x}) W(\mathbf{x}) \mathrm{d}A = C \delta_{mn} \delta_{jk},
\end{equation}
where $C \neq 0$ and $P_{m,k}$ are total degree $m$ polynomials. 
Many such sets of orthogonal polynomials are well-known and well-studied on various domains $\Omega$ such as $\R$, real intervals, simple $2$D and $3$D domains, as well as various higher dimensional spheres and polygons \cite{dunkl_orthogonal_2014}. The relevant set of orthogonal polynomials for this paper are the Jacobi polynomials on the real line and on the triangle respectively. This section will thus give a quick overview of Jacobi polynomials aimed at equipping us with the tools needed to develop the Volterra integral equation solvers in later sections. We refer to \cite{dunkl_orthogonal_2014,gautschi_orthogonal_2004} for introductions with broader scope.\\

The Jacobi polynomials  are orthogonal on $[-1,1]$:
\begin{equation*}
  \int_{-1}^{1} C_{(\alpha,\beta,m,n)} \left( 1-x \right)^\alpha\left( 1+x \right)^\beta P_m^{(\alpha,\beta)}(x) P_n^{(\alpha,\beta)}(x) \mathrm{d}x = \delta_{nm},
\end{equation*}
where $W_{(\alpha, \beta)}(x)=C_{(\alpha,\beta,m,n)}\left ( 1-x \right)^\alpha\left( 1+x \right)^\beta$ acts as the weight function and $\delta_{nm}$ is the Kronecker delta. While the choice of $[-1,1]$ is natural, the Jacobi polynomials can be shifted to any real interval an application requires. For $\alpha=\beta=0$ the Jacobi polynomials reduce to the Legendre polynomials \cite{dunkl_orthogonal_2014}. \\

One of the primary applications of interest for the study of orthogonal polynomials are their applications in the expansion of non-polynomial functions:
\begin{equation*}
    f(x) = \sum_{n=0}^\infty p_n(x) f_n = \mathbf{P}(x)^\mathsf{T} \mathbf{f},
\end{equation*}
where $f_n$ is the function-specific coefficient of the $n$-th polynomial $p_n$ and we use the notation 
\begin{align*}
     \mathbf{P}(x) := \begin{pmatrix}
           p_{0}(x) \\
           p_{1}(x) \\
           \vdots
         \end{pmatrix} &,\hspace{5mm}
     \mathbf{f} := \begin{pmatrix}
           f_{0} \\
           f_{1} \\
           \vdots
         \end{pmatrix}.
\end{align*}
For numerical applications one uses finitely many terms in the above sum to obtain an approximation. If a distinction between different sets of polynomials and coefficient vectors on different domains is required we specify by indicating the type of polynomials using standard notation for the polynomials, such as $\mathbf{P}^{(\alpha,\beta)}(x)$ for the Jacobi polynomials on a real interval, and the domain using index notation, e.g. for the bivariate orthogonal polynomial coefficient vector of $g(x,y)$ on the triangle domain we write $\mathbf{g}_\vartriangle$.\\

To use function approximation of this type in a non-trivial numerical application one needs ways to do computations on functions represented as coefficient vectors. Basic computations such as addition and subtraction of functions have obvious implementations. Furthermore one can compute $x f(x)$ if $f(x)$ is already approximated as a coefficient vector: to do this one uses so-called Jacobi operators $\mathrm{J}$ which act as 
\begin{equation*}
    \mathbf{P}(x)^\mathsf{T} \mathrm{J} \mathbf{f}_{[0,1]} = x f(x).
\end{equation*}
This is efficiently possible because the Jacobi polynomials satisfy a three-term recurrence relationship, making $\mathrm{J}$ a tridiagonal operator (see e.g. \cite{dunkl_orthogonal_2014,nist_2018,olver_sparse_2019}):
\begin{equation}\label{eq:jacobiop_realline}
\mathrm{J} = \begin{pmatrix} a_0 & b_0 & &  \\ c_0 & a_1 & b_1 & \\ & c_1 & a_2 & \ddots \\ &&\ddots&\ddots \end{pmatrix}.
\end{equation}
Additionally, our approach to Volterra integral equations of the second kind will require explicit constructors for raising operators $\mathrm{S}_{(\alpha, \beta)}^{(\alpha + 1,\beta)}, \mathrm{S}_{(\alpha, \beta)}^{(\alpha,\beta +1)}$ which are defined to increment from the Jacobi bases $\mathbf{P}^{(\alpha,\beta)}(x)$ to $\mathbf{P}^{(\alpha +1,\beta)}(x)$ and $\mathbf{P}^{(\alpha,\beta +1)}(x)$ respectively. Increments to $\alpha$ and $\beta$ can be computed using these operators but decrementing is generally only well-defined in the sense of \emph{weighted} lowering operators:
\begin{align*}
x f(x) &= \mathbf{P}^{(\alpha-1,\beta)}(x){}^\mathsf{T} \mathrm{L}_{(\alpha, \beta)}^{(\alpha-1,\beta)} \mathbf{f},\\
(1-x) f(x) &= \mathbf{P}^{(\alpha,\beta-1)}(x){}^\mathsf{T} \mathrm{L}_{(\alpha, \beta)}^{(\alpha,\beta-1)} \mathbf{f}.
\end{align*}
 The explicit forms of the operators $\mathrm{J}$, $\mathrm{S}_{(\alpha, \beta)}^{(\alpha + 1,\beta)}$, $\mathrm{S}_{(\alpha, \beta)}^{(\alpha,\beta +1)}$, $\mathrm{L}_{(\alpha, \beta)}^{(\alpha-1,\beta)}$ and $\mathrm{L}_{(\alpha, \beta)}^{(\alpha,\beta-1)}$ are well known in the literature, see for example \cite{nist_2018,olver_sparse_2019,dunkl_orthogonal_2014} and the references therein.
\subsection{Jacobi polynomials on the triangle} \label{sec:jacobitriangle}
We now briefly discuss how function approximation using bivariate orthogonal polynomials works in general and then move on to discuss the Jacobi polynomials on the canonical unit simplex 
\begin{equation*}
T^2 = \left\{ (x,y) : 0 \leq x, 0 \leq y \leq 1-x  \right\}.
\end{equation*}
We use a basis on this triangle in the following sections to compute Volterra integrals and solve integral equations. As in the univariate case, bivariate orthogonal polynomials are said to be orthogonal with respect to an inner product akin to (\ref{eq:innerprod}).

Analogously to how functions of a single variable may be expanded into a basis of univariate orthogonal polynomials as $f(x) = \sum_{n=0}^\infty p_n(x) f_n$ we can expand a function of two variables in a basis of bivariate polynomials as
\begin{equation*}
    f(x,y) = \sum_{n=0}^\infty \sum_{k=0}^n p_{n,k}(x,y) f_{n,k}.
\end{equation*}
Writing the bivariate polynomials of total degree $n$ as
\begin{equation*}
    \mathbb{P}_n(x,y)= \begin{pmatrix}
           p_{n,0}(x,y) \\
           p_{n,1}(x,y) \\
           \vdots \\
           p_{n,n}(x,y)
           \end{pmatrix}
\end{equation*}
allows for the following compact notation for the infinite-dimensional polynomial basis:
\begin{equation*}
\mathbf{P}(x,y)=\begin{pmatrix}
           \mathbb{P}_{0}(x,y) \\
           \mathbb{P}_{1}(x,y) \\
           \vdots \\
           \end{pmatrix}.
\end{equation*}
In this notation the expansion of a function of two variables in the bivariate polynomial basis becomes
\begin{equation*}
    f(x,y) = \sum_{n=0}^\infty \sum_{k=0}^n p_{n,k}(x,y) f_{n,k} = \mathbf{P}(x,y)^\mathsf{T} \mathbf{f}.
\end{equation*}
For function approximation one simply uses an appropriate finite cutoff of this expansion. 

On the triangle $T^2$ we focus on the Jacobi weights $x^\alpha y^\beta (1-x-y)^\gamma$. One elegant way to define the corresponding Jacobi polynomials $\mathbf{P}^{(\alpha, \beta, \gamma)} (x,y)$ on the canonical triangle $T^2$ is by referring to the Jacobi polynomials $\mathbf{P}^{(\alpha,\beta)} (x)$ on the real interval $[-1,1]$ (compare \cite[Proposition 2.4.1]{dunkl_orthogonal_2014}):
\begin{equation}\label{eq:jacobitriangleeqs}
    P_{k,n}^{(\alpha, \beta, \gamma)} (x,y) = \left(1-x\right)^k P_{n-k}^{\left(2k+\beta+\gamma+1,\alpha\right)}\left(2x-1\right) P_k^{\left(\gamma,\beta\right)}\left(\frac{2y}{1-x}-1\right).
\end{equation}
Defined as such the triangle Jacobi polynomials are orthogonal with respect to a weighted integral over the canonical triangle domain $T^2$:
\begin{equation*}
    \int_{0}^1 \int_{0}^{1-x} x^\alpha y^\beta (1-x-y)^\gamma P_{k,n}^{(\alpha, \beta, \gamma)} (x,y) P_{j,m}^{(\alpha, \beta, \gamma)} (x,y) \mathrm{d}y \mathrm{d}x = C_{(\alpha,\beta,\gamma)} \delta_{jk} \delta_{mn}.
\end{equation*}
The detailed form of the constant $C_{(\alpha,\beta,\gamma)}$ is not important here but can for example be found in \cite{dunkl_orthogonal_2014}. We will primarily use the Jacobi polynomials shifted to the $[0,1]$ interval and denote them by $\tilde{\mathbf{P}}^{(\alpha,\beta)}$, which allows us to write the Jacobi polynomials on the triangle as:
\begin{equation}\label{eq:shiftedjacobitriangleeqs}
    P_{k,n}^{(\alpha, \beta, \gamma)} (x,y) = (1-x)^k \tilde{P}_{n-k}^{\left(2k+\beta+\gamma+1,\alpha\right)}\left(x\right) \tilde{P}_k^{\left(\gamma,\beta\right)}\left(\frac{y}{1-x}\right).
\end{equation}
As in the 1-dimensional case we can define Jacobi operators $\mathrm{J}_x$ and $\mathrm{J}_y$, one for each variable, which respectively act as
\begin{align*}
     \mathbf{P}(x,y)^\mathsf{T} \mathrm{J}_x \mathbf{f}_\vartriangle &= x f(x,y),\\
     \mathbf{P}(x,y)^\mathsf{T} \mathrm{J}_y \mathbf{f}_\vartriangle &= y f(x,y),
\end{align*}
for a given bivariate polynomial basis. Unlike the 1-dimensional Jacobi polynomial case these operators are not tridiagonal but block tridiagonal operators for the triangle Jacobi polynomials \cite{olver_sparse_2019}:
\begin{equation} \label{eq:jacobiop_triangle}
\mathrm{J}_x = \begin{pmatrix} A_0^x & B_0^x & &  \\ C_0^x & A_1^x & B_1^x & \\ & C_1^x & A_2^x & \ddots \\ &&\ddots&\ddots \end{pmatrix}, \quad \mathrm{J}_y = \begin{pmatrix} A_0^y & B_0^y & &  \\ C_0^y & A_1^y & B_1^y & \\ & C_1^y & A_2^y & \ddots \\ &&\ddots&\ddots \end{pmatrix},
\end{equation}
where $A_n^x,A_n^y \in \mathbb{R}^{(n+1)\times(n+1)}$, $B_n^x,B_n^y \in \mathbb{R}^{(n+1)\times(n+2)}$ and $C_n^x,C_n^y \in \mathbb{R}^{(n+2)\times(n+1)}$. Analogous operators to the raising and lowering operators discussed for the real interval case can be constructed for the Jacobi polynomials on the triangle as well, see \cite{olver_recurrence_2019,olver_sparse_2019}, but we omit their discussion as we will not make direct use of them in this paper.\\
To make use of Jacobi polynomials for the approximation of functions on the triangle domain in a numerical context one  requires efficient algorithms to determine the coefficient vector $\mathbf{f}_\vartriangle$ for a given function $f(x,y)$ of two variables. This can be done using an algorithm and its implementation in a C library by Slevinsky \cite{slevinsky_conquering_2017,slevinsky_fast_2017,slevinsky_fasttransforms_2019}.
\subsection{Function evaluation using Clenshaw's algorithm}\label{sec:clenshawintro}
Clenshaw's algorithm provides an efficient and direct method to evaluate functions expanded into orthogonal polynomial bases at given points, i.e. to evaluate $\sum_{n=0}^{N} p_n(\mathbf{x}) f_n$ at $\mathbf{x}_* \in \mathbb{R}^d$, cf. \cite{clenshaw_note_1955,olver_sparse_2019}. The algorithm makes use of the polynomial basis' recurrence relationships to reduce function evaluation to the solution of an upper triangular linear system using backward substitution. In this section we give an outline of how this is done for Jacobi polynomials on the real interval and the triangle, which is discussed in more detail in \cite{olver_sparse_2019}. An operator valued variant of what is discussed in this section will be used for efficient kernel computations for Volterra integrals in section \ref{sec:kernelcomp}.  We mention a major benefit of Clenshaw's algorithm over building polynomials/operators via forward recurrences is that there is substantially less memory needed in the intermediary calculations. \\

For the case of Jacobi polynomials on a real interval, the three-term recurrence relationship seen in the Jacobi operator in (\ref{eq:jacobiop_realline}) can be used to write
\begin{equation}
\mathcal{L}_N(x_*) \mathbf{P}_N^{(\alpha,\beta)}(x_*) = \begin{pmatrix} 1 &  & & &&  \\ a_0-x_* & b_0 & & &  \\ c_0 & a_1-x_* & b_1 & &  \\&\ddots&\ddots&  \ddots& \\   & & c_{N-2} & a_{N-1}-x_* & b_{N-1}   \end{pmatrix} \begin{pmatrix} P^{(\alpha,\beta)}_1(x_*) \\ P^{(\alpha,\beta)}_2(x_*) \\ P^{(\alpha,\beta)}_3(x_*)\\ \vdots \\ P^{(\alpha,\beta)}_N(x_*) \end{pmatrix} = \mathbf{e}_0,
\end{equation}
where $\mathbf{e}_0$ is the first standard basis vector with $1$ in its first component and of appropriate length. Solving this lower triangular system via forward substituition provides a way to recursively evaluate each component of $\mathbf{P}^{(\alpha,\beta)}(x)$ and thus also $\mathbf{P}^{(\alpha,\beta)}(x)^\mathsf{T} \mathbf{f}$ if the coefficients of $f(x)$ in this basis are known. Clenshaw's algorithm is conceptually similar but uses backward substition on the system 
\begin{equation}
f(x_*) = \mathbf{P}^{(\alpha,\beta)}_N(x_*)^\mathsf{T} \text{\textbf{\textit{a}}} = \mathbf{e}_0^\mathsf{T} \mathcal{L}_N(x_*)^{-\mathsf{T}} \text{\textbf{\textit{a}}},
\end{equation}
where \textbf{\textit{a}} is the column vector collecting $a_0$ to $a_N$. The case for the Jacobi polynomials on the triangle was recently discussed in \cite{olver_sparse_2019} and on the basis of the recurrence in (\ref{eq:jacobiop_triangle}) involves a block triangular system for evaluation at $\mathbf{x}_* = (x_*,y_*)$ instead:
\begin{equation*}
\mathcal{L}_N(\mathbf{x}_*) \mathbf{P}_N^{(\alpha,\beta,\gamma)}(\mathbf{x}_*) = \begin{pmatrix} \mathbb{1}_1 &  & & &  \\ A_0^x-x_*\mathbb{1}_1 & B_0^x & &  \\ A_0^y-y_*\mathbb{1}_1 & B_0^y & &   \\ C_0^x & A_1^x-x_*\mathbb{1}_2 & B_1^x &  \\ C_0^y & A_1^y-y_*\mathbb{1}_2 & B_1^y &  \\&\ddots&\ddots&  \ddots \end{pmatrix} \mathbf{P}_N^{(\alpha,\beta,\gamma)}(\mathbf{x}_*) = \mathbf{e}_0,
\end{equation*}
where $\mathbb{1}_k$ denotes the $k\times k$ identity matrix. As this is not a triangular but a block triangular matrix one cannot use forward substitution without first applying a preconditioner:
\begin{equation*}
\begin{pmatrix} 1 & & &  \\  & B_0^+ &  & \\ & &  B_1^+& \\ && &\ddots \end{pmatrix}\mathcal{L}_N(\mathbf{x}_*) = \mathrm{\tilde{\mathcal{L}}}_N(\mathbf{x}_*).
\end{equation*}
$\tilde{\mathcal{L}}_N(\mathbf{x}_*)$ is then a proper lower triangular matrix and can be used in an analogous system to the ones above to evaluate the polynomials, and thus a function expanded into that polynomial basis, recursively via forward substitution. A preconditioner which satisfies these requirements is the block diagonal matrix whose elements are comprised of a left inverse of the blocks 
\begin{equation*}
B_n = \begin{pmatrix} B_n^x \\ B_n^y \end{pmatrix},
\end{equation*}
such that $B_n^+ B_n = \mathbb{1}_n$. Clenshaw's algorithm for the triangle Jacobi polynomials is thus
\begin{equation*}
f(\mathbf{x}_*) = \mathbf{P}^{(\alpha,\beta,\gamma)}_N(\mathbf{x}_*)^\mathsf{T} \text{\textbf{\textit{A}}} = \mathbf{e}_0^\mathsf{T} \tilde{\mathcal{L}}_N(\mathbf{x}_*)^{-\mathsf{T}} \text{\textbf{\textit{A}}}.
\end{equation*}
This system can be solved via backward substitution in optimal $O(N^2)$ complexity if one chooses $B_n^+$ carefully, see \cite{olver_sparse_2019}.
\section{A numerical method for Volterra integral equations} \label{sec:volterra}
\subsection{Volterra integrals on the triangle}
In this section we describe how to represent Volterra integrals using bivariate orthogonal polynomials on a triangle domain by moving to a view of operators acting on coefficient vectors. The following section extends this method to Volterra integral equations of the first and second kind.\\

We first  describe the idea behind the relevant operators and their use before determining their entries in matrix representation. The first operator we need is the integration operator for a function given as the coefficients of orthogonal polynomials on a triangle. We label this operator $\mathrm{Q}_y$ and it acts as
\begin{equation*}
    \mathbf{P}(x)^\mathsf{T} \mathrm{W}_\mathrm{Q} \mathrm{Q}_y \mathbf{f}_\vartriangle = \int_{0}^{1-x} f(x,y) \mathrm{d}y,
\end{equation*}
where $\mathrm{W}_\mathrm{Q}$ is a to-be-determined weight function which depends on the used basis. The reason for the limits of integration to be defined in this way for $\mathrm{Q}_y$ will become clear once we discuss the explicit form of these operators and how one can make optimal use of the triangle domain's symmetries.
Second, we need an operator $\mathrm{E}_y$ which extends a one-dimensional function on $[0,1]$ to one on $T^2$, that is:
\begin{equation*}
    \mathbf{P}(x)^\mathsf{T}\mathbf{f}_{[0,1]}  =     \mathbf{P}(x,y)^\mathsf{T}  \mathrm{E}_y \mathbf{f}_{[0,1]} 
\end{equation*}
Together these two operators can be used to compute integrals of the form
\begin{equation*}
    \int_0^{1-x} f(y) \mathrm{d}y = \mathbf{P}(x)^\mathsf{T} \mathrm{W}_\mathrm{Q}  \mathrm{Q}_y \mathrm{E}_y \mathbf{f}_{[0,1]}
\end{equation*}
with function $f$ depending on a single variable. To instead integrate from $0$ to $x$ we use a reflection operator. Due to symmetries of the polynomials, particular basis changes in a Jacobi basis obey the simple rule \cite{nist_2018, dunkl_orthogonal_2014}:
\begin{equation*}
\tilde{P}_n^{(\alpha,\beta)}(x) = (-1)^n \tilde{P}_n^{(\beta,\alpha)}(1-x).
\end{equation*}
We use $\mathrm{R}$ to refer to the operator that uses the above property to reflect the function on the $[0,1]$ interval via a basis change, i.e.
\begin{equation}\label{eq:reflectbasis}
\tilde{\mathbf{P}}^{(\alpha,\beta)}(x)^\mathsf{T} \mathrm{R} \mathbf{f}  = \sum_n (-1)^n f_n \tilde{P}_n^{(\beta,\alpha)}(x)  =  f(1-x).
\end{equation}
$\mathrm{J}_x$ and $\mathrm{J}_y$ have important commutation relations with the introduced $\mathrm{Q}_y$ and $\mathrm{E}_y$ operators. As the $\mathrm{Q}_y$ operator integrates with respect to $y$ and collapses a bivariate coefficient vector back to a univariate one the multiplication-with-$x$ operator changes from being multiplication-with-$x$ on the triangle ($=\mathrm{J}_x$) to being multiplication-with-$x$ on the real interval ($=\mathrm{J}$) when pulled through the $\mathrm{Q}_y$ operator. A similar relation holds for similar reasons for $\mathrm{J}_y$ and $\mathrm{E}_y$:
\begin{align}\label{eq:commutation1}
     \mathrm{Q}_y \mathrm{J}_x \mathbf{f}_\vartriangle &= \mathrm{J} \mathrm{Q}_y  \mathbf{f}_\vartriangle,\\ \label{eq:commutation2}
      \mathrm{J}_y \mathrm{E}_y \mathbf{f}_{[0,1]} &=  \mathrm{E}_y \mathrm{J} \mathbf{f}_{[0,1]}. 
\end{align}
We now give the explicit matrix representations for the operators $\mathrm{Q}_y$ and $\mathrm{E}_y$ and discuss a sensible polynomial basis choice. The explicit form of the Jacobi operators on the real line is known in the literature (e.g. \cite{dunkl_orthogonal_2014,olver_sparse_2019}) and thus receives no further discussion here. To determine the explicit form of $\mathrm{Q}_y$ we begin by plugging in the polynomial expansion of $f(x,y)$ into the intended integral operation and using the Jacobi polynomials on the triangle domain as seen in  (\ref{eq:shiftedjacobitriangleeqs}) for our basis $p_{n,k}$ with $\alpha=\beta=\gamma=0$:
\begin{align*}
    \mathbf{P}^{(1,0)}(x)^\mathsf{T} \mathrm{W}_\mathrm{Q} \mathrm{Q}_y \mathbf{f}_\vartriangle &= \int_{0}^{1-x} f(x,y) \mathrm{d}y = \int_{0}^{1-x} \sum_{n=0}^\infty \sum_{k=0}^n p_{n,k}(x,y) f_{n,k} \mathrm{d}y  \\ &= \sum_{n=0}^\infty \sum_{k=0}^n f_{n,k} (1-x)^k \tilde{P}_{n-k}^{(2k+1,0)}(x) \int_{0}^{1-x} \tilde{P}_{k}^{(0,0)}\left(\frac{y}{1-x}\right) \mathrm{d}y \\ &= \sum_{n=0}^\infty \sum_{k=0}^n f_{n,k} (1-x)^{k+1} \tilde{P}_{n-k}^{(2k+1,0)}(x) \int_{0}^{1} \tilde{P}_{k}^{(0,0)}\left(s\right) \mathrm{d}s,
\end{align*}
where a substitution of $\frac{y}{1-x} \rightarrow s$ was made in the last step. As $\tilde{P}_{k}^{(0,0)}$ are just the Legendre polynomials on $[0,1]$ we see that $\int_{0}^{1} \tilde{P}_{k}^{(0,0)}\left(s\right) \mathrm{d}s = 0, \forall k>0$ and $\int_{0}^{1} \tilde{P}_{0}^{(0,0)}\left(s\right) \mathrm{d}s = 1$, resulting in
\begin{align*}
    \mathbf{P}^{(1,0)}(x)^\mathsf{T} \mathrm{W}_\mathrm{Q} \mathrm{Q}_y \mathbf{f}_\vartriangle &=  \sum_{n=0}^\infty f_{n,0} (1-x) \tilde{P}_{n}^{(1,0)}(x)
\end{align*}
for integration from $0$ to $1-x$. Via (\ref{eq:reflectbasis}) we further obtain
\begin{align*}
    \mathbf{P}^{(0,1)}(x)^\mathsf{T} \mathrm{W}_\mathrm{Q} \mathrm{Q}_y \mathbf{f}_\vartriangle &=  \sum_{n=0}^\infty f_{n,0} (-1)^n (1-x) \tilde{P}_{n}^{(1,0)}(x)
\end{align*}
for integration from $0$ to $x$. This derivation shows that starting in the Jacobi polynomial basis on the triangle $T^2$ with $\alpha=\beta=\gamma=0$ for the approximation of $f(x,y)$ results in the following block diagonal structure for the integration from $0$ to $1-x$ operator with weight $\mathrm{W}_\mathrm{Q}=(1-x)$:
\begin{equation*}
\mathrm{Q}_y =   \left(\begin{array}{cccccccccc}
\cline{1-1}
\multicolumn{1}{|c|}{1} &  &  &  &  &  &  &  &  &  \\ \cline{1-3}
\multicolumn{1}{c|}{} & 1 & \multicolumn{1}{c|}{0} &  &  &  &  &  &  &  \\ \cline{2-6}
 &  & \multicolumn{1}{c|}{} & 1 & 0 & \multicolumn{1}{c|}{0} &  &  &  &  \\ \cline{4-10} 
 &  &  &  &  & \multicolumn{1}{c|}{} & \ddots & \ddots & \ddots & \multicolumn{1}{c|}{\ddots} \\ \cline{7-10} 
\end{array}\right)
\end{equation*}
where the $n$-th block is an $n$-dimensional row vector with $1$ in the first element and $0$ in all remaining elements. An additional $(-1)^n$ term and change of basis changes this integration to be from $0$ to $x$ instead. The expansion operator $\mathrm{E}_y$ from the $\mathbf{P}^{(1,0)}(x)$ basis to the canonical triangle Jacobi polynomials where $\alpha=\beta=\gamma=0$ has the block diagonal structure
\begin{equation*}
\mathrm{E}_y = \left( \begin{array}{cccc}
\cline{1-1}
\multicolumn{1}{|c|}{\times} &  &  &  \\ \cline{1-2}
\multicolumn{1}{c|}{} & \multicolumn{1}{c|}{\times} &  &  \\
\multicolumn{1}{c|}{} & \multicolumn{1}{c|}{\times} &  &  \\ \cline{2-3}
 & \multicolumn{1}{c|}{} & \multicolumn{1}{c|}{\times} &  \\
 & \multicolumn{1}{c|}{} & \multicolumn{1}{c|}{\times} &  \\
 & \multicolumn{1}{c|}{} & \multicolumn{1}{c|}{\times} &  \\ \cline{3-4} 
 &  & \multicolumn{1}{c|}{} & \multicolumn{1}{c|}{\ddots} \\
 &  & \multicolumn{1}{c|}{} & \multicolumn{1}{c|}{\ddots} \\
 &  & \multicolumn{1}{c|}{} & \multicolumn{1}{c|}{\ddots} \\
 &  & \multicolumn{1}{c|}{} & \multicolumn{1}{c|}{\ddots} \\ \cline{4-4} 
\end{array} \right)
\end{equation*}
where the $n$-th block is an $n$-dimensional column vector whose $j$-th entry is given by
\begin{equation*}
\frac{(-1)^{j+n}(2j-1)}{n}.
\end{equation*}
Importantly, multiplication of $\mathrm{Q}_y$ and $\mathrm{E}_y$ yields a diagonal matrix whose $n$-th entry can be directly generated without any matrix multiplication being required (compare \cite{nist_2018}):
\begin{equation*}\label{eq:diagonal}
(\mathrm{Q}_y\mathrm{E}_y)_{n,n} =(\mathrm{D}_y)_{n,n}=\frac{(-1)^{n+1}}{n}.
\end{equation*}
These observations justify the basis choices as well as the choice of the limits of integration for $\mathrm{Q}_y$ from the standpoint of computational efficiency. Defining $\mathrm{Q}_y$ as the integration operator from $0$ to $x$ does not avoid the reflection step and only results in a less efficient or equivalent placement for it.\\
\subsection{Kernel computations using Clenshaw's algorithm} \label{sec:kernelcomp}
Putting all the above observations together means one can save a significant amount of computation time by the use of a recurrence when simultaneously using an operator valued polynomial approximation for the kernel $K(\mathrm{J}_x,\mathrm{J}_y)$ and then using the known commutation relations in (\ref{eq:commutation1}--\ref{eq:commutation2}). To illustrate the idea behind this approach we first discuss how to do this for a monomial kernel (or equivalently a kernel approximated in a monomial basis) and then show how these ideas can be expanded to arbitrary polynomial bases for the kernel using a variant of Clenshaw's algorithm.\\
Assuming a monomial expansion for the kernel, i.e. $K(x,y)= \sum_{n=0}^\infty \sum_{j=0}^n k_{nj} x^{n-j} y^j$, the primary part of the Volterra integration operator has the form
\begin{equation*}
\mathrm{Q}_y K(\mathrm{J}_x,\mathrm{J}_y) \mathrm{E}_y = \mathrm{Q}_y \left( \sum_{n=0}^\infty \sum_{j=0}^n k_{nj} \mathrm{J}_x^{n-j} \mathrm{J}_y^j \right) \mathrm{E}_y = \sum_{n=0}^\infty \sum_{j=0}^n k_{nj} \mathrm{J}^{n-j} \mathrm{Q}_y \mathrm{E}_y \mathrm{J}^j ,
\end{equation*}
where we have used the commutation relations in (\ref{eq:commutation1}--\ref{eq:commutation2}) to rewrite the summation using the Jacobi operator for the interval Jacobi polynomials.
Recalling that $\mathrm{Q}_y \mathrm{E}_y$ is a diagonal matrix which can be generated without any need to separately compute and multiply $\mathrm{Q}_y$ and $\mathrm{E}_y$, all that is left to compute are the required combinations of $\mathrm{Q}_y \mathrm{E}_y$ with the Jacobi operators, which can be built up recursively. This kind of recursive computation of all the required elements for the kernel can save significant computation cost if executed correctly. Since only the coefficients of $K(x,y)$ for this basis actually change across different problems one can in principle also store the basis elements $\mathrm{J}^{n-j} \mathrm{Q}_y \mathrm{E}_y \mathrm{J}^j$ and re-use them making this numerical evaluation of Volterra integrals even faster upon repeated use. This approach differs slightly depending on whether one intends to compute integrals from $0$ to $1-x$ or to compute integrals from $0$ to $x$. In the case of integrals from $0$ to $x$, one is either required to supply $K(1-x,y)$ to the algorithm or alternatively the Jacobi operators on the left can be replaced by $(\mathbb{1}-\mathrm{J})$ to account for the reflection, meaning that the basis elements become $(\mathbb{1}-\mathrm{J})^{n-j} \mathrm{Q}_y \mathrm{E}_y \mathrm{J}^j$. Taking the weight $\mathrm{W}_\mathrm{Q}$ into consideration the full Volterra integral operator is then
\begin{equation*}
    \mathrm{R} (\mathbb{1}-\mathrm{J}) \mathrm{Q}_y K(\mathrm{J}_x,\mathrm{J}_y) \mathrm{E}_y = \mathrm{R} (\mathbb{1}-\mathrm{J}) \sum_{n=0}^\infty \sum_{j=0}^n k_{nj} (\mathbb{1}-\mathrm{J})^{n-j} \mathrm{Q}_y \mathrm{E}_y \mathrm{J}^j
    .
\end{equation*}

This straightforward approach evidently only works if the kernel is of a form that may sensibly be approximated using monomials but it inspires an analogous approach based on expanding the kernel in its own orthogonal polynomial basis which need not be the same as those used to expand the function $f$. We use a variant of the Clenshaw algorithm introduced in section \ref{sec:clenshawintro} to build the kernel in terms of the Jacobi operators. In principle one could compute $K(\mathrm{J}_x,\mathrm{J}_y)$ as a full multiplication operator acting on a triangle Jacobi coefficient vector using an operator-valued version of Clenshaw's algoritm as discussed in \cite{olver_sparse_2019}. This is not the most efficient way to approach this problem, however, as it would mean losing the diagonal $\mathrm{Q}_y \mathrm{E}_y$ since for such an operator the multiplication with $K(\mathrm{J}_x,\mathrm{J}_y)$ would need to happen between $\mathrm{Q}_y$ and $\mathrm{E}_y$. Nevertheless, we will briefly discuss how to generate this multiplication by $K(\mathrm{J}_x,\mathrm{J}_y)$ operator in order to see which modifications one can make to this approach in order to respect the symmetries of the triangle and end up with recursive basis generation similar to the monomial kernel expansion case. 

The multiplication by $K(x,y)$ operator, which we label $\mathrm{M}_K$, can be written in an operator Clenshaw approach as (see \cite{olver_sparse_2019,olver_fast_2013,vasil_tensor_2016}):
\begin{equation}\label{eq:clenshawoperatormul}
\mathrm{M}_K = (\mathbf{e}_0 \otimes \mathbb{1})\mathcal{L}^\mathsf{-T} \mathbf{K}_\vartriangle,
\end{equation}
where $\otimes$ denotes the Kronecker product and $\mathcal{L}$ is defined as
\begin{equation*}
\mathcal{L} = \begin{pmatrix} (\mathbb{1}_1 \otimes \mathbb{1}) &  & & &  \\ (A_0^x \otimes \mathbb{1})-(\mathbb{1}_1 \otimes \mathrm{J}_x) & (B_0^x \otimes \mathbb{1}) & &  \\ (A_0^y \otimes \mathbb{1})-(\mathbb{1}_1 \otimes \mathrm{J}_y) & (B_0^y \otimes \mathbb{1}) & &   \\ (C_0^x \otimes \mathbb{1}) & (A_1^x \otimes \mathbb{1})-(\mathbb{1}_2 \otimes \mathrm{J}_x) & (B_1^x \otimes \mathbb{1}) &  \\ (C_0^y \otimes \mathbb{1}) & (A_1^x \otimes \mathbb{1})-(\mathbb{1}_2 \otimes \mathrm{J}_y) & (B_1^y  \otimes \mathbb{1})&  \\&\ddots&\ddots&  \ddots \end{pmatrix}.
\end{equation*}
As discussed for the Clenshaw evaluation method in section \ref{sec:clenshawintro} this system requires preconditioning to become solvable via backward substitution. For this case the preconditioner is
\begin{equation*}
\begin{pmatrix} (\mathbb{1}_1 \otimes \mathbb{1}) & & &  \\  & (B_0^+ \otimes \mathbb{1}) &  & \\ & &  (B_1^+ \otimes \mathbb{1})& \\ && &\ddots \end{pmatrix}\mathcal{L} = \tilde{\mathcal{L}},
\end{equation*}
with the $B_n^+$ defined as in section \ref{sec:clenshawintro}. Using such an operator valued Clenshaw algorithm one can compute $\mathrm{M}_K$ and thus obtain $\mathrm{Q}_y K(\mathrm{J}_x,\mathrm{J}_y) \mathrm{E}_y$ via $\mathrm{Q}_y \mathrm{M}_K \mathrm{E}_y$. However, as discussed above, for our purposes of Volterra integral operators this is computationally wasteful and misses the chance to take advantage of the triangle symmetries which allow for $\mathrm{Q}_y \mathrm{E}_y$ to be directly computable and diagonal. So instead we replace the $\mathbf{K}_\vartriangle$ in (\ref{eq:clenshawoperatormul}) by $(\mathbf{K}_\vartriangle \otimes \mathrm{Q}_y \mathrm{E}_y)$. The relations (\ref{eq:commutation1}--\ref{eq:commutation2}) then imply that all $\mathrm{J}_x$ operators may be replaced by a left multiplication with $\mathrm{J}$ and all $\mathrm{J}_y$ operators may be replaced by a right multiplication with $\mathrm{J}$ (respectively denoted by a $\diamond$ on the appropriate side). The system to solve thus becomes
\begin{equation*}
\mathrm{Q}_y K(\mathrm{J}_x,\mathrm{J}_y) \mathrm{E}_y = (\mathbf{e}_0 \otimes \mathbb{1})\mathcal{L}_V^\mathsf{-T} (\mathbf{K}_\vartriangle \otimes \mathrm{Q}_y \mathrm{E}_y),
\end{equation*}
with
\begin{equation*}
\mathcal{L}_V = \begin{pmatrix} (\mathbb{1}_1 \otimes \mathbb{1}) &  & & &  \\ (A_0^x \otimes \mathbb{1})-(\mathbb{1}_1 \otimes \mathrm{J} \diamond ) & (B_0^x \otimes \mathbb{1}) & &  \\ (A_0^y \otimes \mathbb{1})-(\mathbb{1}_1 \otimes \diamond\mathrm{J}) & (B_0^y \otimes \mathbb{1}) & &   \\ (C_0^x \otimes \mathbb{1}) & (A_1^x \otimes \mathbb{1})-(\mathbb{1}_2 \otimes  \mathrm{J} \diamond) & (B_1^x \otimes \mathbb{1}) &  \\ (C_0^y \otimes \mathbb{1}) & (A_1^x \otimes \mathbb{1})-(\mathbb{1}_2 \otimes \diamond \mathrm{J}) & (B_1^y  \otimes \mathbb{1})&  \\&\ddots&\ddots&  \ddots \end{pmatrix}.
\end{equation*}
After preconditioning as above, this allows the recursive and efficient computation of $\mathrm{Q}_y K(\mathrm{J}_x,\mathrm{J}_y) \mathrm{E}_y$ via an operator valued Clenshaw type algorithm while at the same time taking advantage of the diagonal nature of $\mathrm{Q}_y \mathrm{E}_y$. As in the monomial case, this approach has to be modified when integrating from $0$ to $x$ instead of from $0$ to $1-x$. In the $0$ to $x$ case one needs to take the reflection into account, which ends up either replacing all the left multiplications with $\mathrm{J}$ by left multiplications with $(\mathbb{1}-\mathrm{J})$ for the same reasons as above, while the right multiplications corresponding to $y$ multiplication remain the same, or requiring that $K(1-x,y)$ be supplied to the algorithm. Finally, this operator still requires left multiplication with the basis dependent weight $\mathrm{W}_\mathrm{Q}$ to represent the full Volterra integral operator for this approach.
\subsection{Numerical solutions to linear Volterra integral equations} \label{sec:methodvolterraIE}
The above described computational method for Volterra integrals has a natural extension to solving Volterra integral equations, which we describe in this section. Most generally a Volterra integral equation is any equation in which the unknown appears at least once as the integrand of a Volterra integral as defined in (\ref{eq:volterraintegral}) above. One usually distinguishes between at least two types of Volterra integral equations which are labelled Volterra integral equations of the first and second kind respectively. The Volterra integral equation of the first kind we will be interested in takes the following form:
\begin{equation}\label{eq:firstkind}
     \int_0^{x} K(x,y) u(y) \mathrm{d}y = g(x),
\end{equation}
where $u(x)$ is the unknown function to be solved for, $K(x,y)$ is a given kernel and $g(x)$ is a given function. Volterra integral equations of the second kind we will be interested in take the following form:
\begin{equation}\label{eq:secondkind}
    u(x) -  \int_0^{x} K(x,y) u(y) \mathrm{d}y = g(x),
\end{equation}
where once again $u(x)$ is the unknown function and $K(x,y)$ and $g(x)$ are given. While this is not further explored in this paper, there are natural extensions of these methods for other linear Volterra-type integral equations such as the third kind equations discussed in \cite{allaei_existence_2015,allaei_collocation_2017,song_analysis_2019}. \\ 
Whenever we write $\mathrm{Q}_y K(\mathbb{1}-\mathrm{J}_x,\mathrm{J}_y) \mathrm{E}_y$ in the coming sections, we mean to imply that this operator is computed using the Clenshaw approach detailed in section \ref{sec:kernelcomp}.
\subsubsection{Equations of the first kind}
Extending the above methods for Volterra integrals to Volterra integral equations is straightforward, though one needs to be mindful of the appropriate reflections. Using the above notation conventions, one way to write the Volterra integral equation of the first kind is
\begin{align*}
   \tilde{\mathbf{P}}^{(1,0)}(x)^\mathsf{T} (\mathbb{1}-\mathrm{J}) \mathrm{Q}_y K(\mathbb{1}-\mathrm{J}_x,\mathrm{J}_y) \mathrm{E}_y \mathbf{u} = \tilde{\mathbf{P}}^{(1,0)}(x)^\mathsf{T} \bar{\mathbf{g}} ,\\
  \Rightarrow   \tilde{\mathbf{P}}^{(1,0)}(x)^\mathsf{T}\mathbf{u} =  \tilde{\mathbf{P}}^{(1,0)}(x)^\mathsf{T}\left((\mathbb{1}-\mathrm{J}) \mathrm{Q}_y K(\mathbb{1}-\mathrm{J}_x,\mathrm{J}_y) \mathrm{E}_y \right)^{-1} \bar{\mathbf{g}}.
\end{align*}
The notation $\bar{\mathbf{g}}$ is used to indicate that we are directly supplying the coefficients of the reflected $g(1-x)$ to save an unnecessary additional reflection step, as formally we are solving the equivalent
\begin{equation}
     \int_0^{1-t} K(1-t,y) u(y) \mathrm{d}y = g(1-t).
\end{equation}
All function coefficient vectors in this section are initially expanded in the $\tilde{\mathbf{P}}^{(1,0)}(x)$ basis. This method works in numerical experiments but deriving convergence properties for it proves to be difficult (as is usual for Volterra equations of the first kind). However, under the condition that we can expand the function $q(x) = \frac{g(1-x)}{1-x}$ instead of $g(1-x)$ in $\tilde{\mathbf{P}}^{(1,0)}(x)$, one can find convergence conditions (see section \ref{sec:analysis} for details). Note that solvability of the Volterra integral equation of the first kind implies that both $g$ and $q$ must vanish when the upper limit of integration vanishes. When using $\mathbf{q}$ to denote the coefficient vector of $q(x) = \frac{g(1-x)}{1-x}$ the method then becomes
\begin{align*}
   \tilde{\mathbf{P}}^{(1,0)}(x)^\mathsf{T} \mathrm{Q}_y K(\mathbb{1}-\mathrm{J}_x,\mathrm{J}_y) \mathrm{E}_y \mathbf{u} = \tilde{\mathbf{P}}^{(1,0)}(x)^\mathsf{T} \mathbf{q} ,\\
  \Rightarrow   \tilde{\mathbf{P}}^{(1,0)}(x)^\mathsf{T}\mathbf{u} =  \tilde{\mathbf{P}}^{(1,0)}(x)^\mathsf{T}\left(\mathrm{Q}_y K(\mathbb{1}-\mathrm{J}_x,\mathrm{J}_y) \mathrm{E}_y \right)^{-1} \mathbf{q}.
\end{align*}
meaning that solving this type of equation for $u(x)$ is as simple as computing the coefficient vectors and operators (see the respective sections above for efficient ways to do so) and then solving a banded system of linear equations.
\subsubsection{Equations of the second kind}
Using the above-introduced weighted lowering operator $\mathrm{L}_{(1,0)}^{(0,0)}$ which shifts to the $\tilde{\mathbf{P}}^{(0,0)}(x)$ basis while multiplying with $(1-x)$, reflecting the result and then using a raising operator $\mathrm{S}_{(0,0)}^{(1,0)}$ to return to the $\tilde{\mathbf{P}}^{(1,0)}(x)$ basis we can write Volterra integral equations of the second kind as
\begin{align*}
 \tilde{\mathbf{P}}^{(1,0)}(x)^\mathsf{T} \left(\mathbb{1} - \mathrm{S}_{(0,0)}^{(1,0)} \mathrm{R}  \mathrm{L}_{(1,0)}^{(0,0)} \mathrm{Q}_y K(\mathbb{1}-\mathrm{J}_x,\mathrm{J}_y) \mathrm{E}_y  \right) \mathbf{u} = \tilde{\mathbf{P}}^{(1,0)}(x)^\mathsf{T} \mathbf{g} ,\\
  \Rightarrow   \tilde{\mathbf{P}}^{(1,0)}(x)^\mathsf{T} \mathbf{u} = \tilde{\mathbf{P}}^{(1,0)}(x)^\mathsf{T} \left(\mathbb{1} - \mathrm{S}_{(0,0)}^{(1,0)} \mathrm{R}  \mathrm{L}_{(1,0)}^{(0,0)} \mathrm{Q}_y K(\mathbb{1}-\mathrm{J}_x,\mathrm{J}_y) \mathrm{E}_y  \right)^{-1}\mathbf{g},
\end{align*}
which can once again be solved for $u(x)$ using any linear system of equations solver. Reflecting without the lowering and raising operator is not possible (although there are alternative ways to use such operators to accomplish the same goal) as this would result in an inconsistency between the bases used for the two appearances of $\mathbf{u}$.
\subsubsection{Different limits of integration}
As mentioned above, a similar derivation leads to an analogous method for Volterra integral equations of the first and second kind with different limits of integration:
\begin{align}\label{eq:canonicallimits}
    \int_0^{1-x} K(x,y) u(y) \mathrm{d}y = g(x),\\
    u(x) - \int_0^{1-x} K(x,y) u(y) \mathrm{d}y = g(x),
\end{align}
This results in an identity operator replacing the reflection and conversion operators in the above solution methods and in fact makes these types of equations even more efficient to solve but limits of integration of this sort are seen less often in applications. In particular, the operator version of Volterra integral equations of the first kind with limits of integration $0$ to $1-x$ is:
\begin{align*}
   \tilde{\mathbf{P}}^{(1,0)}(x)^\mathsf{T} \mathrm{Q}_y K(\mathrm{J}_x,\mathrm{J}_y) \mathrm{E}_y \mathbf{u} = \tilde{\mathbf{P}}^{(1,0)}(x)^\mathsf{T} \mathbf{q} ,\\
  \Rightarrow   \tilde{\mathbf{P}}^{(1,0)}(x)^\mathsf{T}\mathbf{u} =  \tilde{\mathbf{P}}^{(1,0)}(x)^\mathsf{T}\left(\mathrm{Q}_y K(\mathrm{J}_x,\mathrm{J}_y) \mathrm{E}_y \right)^{-1} \mathbf{q}.
\end{align*}
where now $\mathbf{q}$ is the coefficient vector of $q(x) = \frac{g(x)}{1-x}$. Equations of the second kind with these limits of integration can be written as:
\begin{align*}
 \tilde{\mathbf{P}}^{(1,0)}(x)^\mathsf{T} \left(\mathbb{1} - (\mathbb{1}-\mathrm{J}) \mathrm{Q}_y K(\mathrm{J}_x,\mathrm{J}_y) \mathrm{E}_y  \right) \mathbf{u} = \tilde{\mathbf{P}}^{(1,0)}(x)^\mathsf{T} \mathbf{g} ,\\
  \Rightarrow   \tilde{\mathbf{P}}^{(1,0)}(x)^\mathsf{T} \mathbf{u} = \tilde{\mathbf{P}}^{(1,0)}(x)^\mathsf{T} \left(\mathbb{1} - (\mathbb{1}-\mathrm{J}) \mathrm{Q}_y K(\mathrm{J}_x,\mathrm{J}_y) \mathrm{E}_y  \right)^{-1}\mathbf{g}.
\end{align*}
We present an implementation of both options for the limits of integration in the next section.
\section{Examples and applications} \label{sec:numericalexamples}
We present three sets of numerical examples to validate our implementation. The first set concerns itself with Volterra integral equations of the first kind, the second with Volterra integral equations of the second kind with kernels of varying oscillatory intensity and the third set discusses a singular Volterra integral equation stemming from a heat conduction problem with mixed boundary conditions. As oscillatory kernels require high orders of polynomials to approximate accurately and the method was not designed for singular kernels, the second and third set are designed to test the method's stability.\\
The computations presented in this section have been performed with an implementation of the scheme in the Julia programming language~\cite{beks2017} in the framework of ApproxFun.jl and MultivariateOrthogonalPolynomials.jl \cite{olver_practical_2014,olver_fast_2013,townsend_automatic_2015}. The coefficients of the solution have relative accuracy with standard floating point arithmetic, even as they decay below machine precision.  Values for absolute errors presented in this section converge beyond the precision of 64-bit floating point numbers because of the rapid convergence of the method and the way ApproxFun.jl implements function approximation (cf. \cite{olver_practical_2014,olver_fast_2013,townsend_automatic_2015})---the only time beyond 64-bit floating point precision numbers (via "BigFloat") were used is in the analytic solutions used as comparisons, as otherwise the convergence of the error would be capped by the precision at which the analytic solution is evaluated.
\subsection{Set 1: Volterra integral equations of the first kind}
We investigate the numerical solution of the following two example Volterra integral equations of the first kind:
\begin{equation}\label{eq:set1_analyticintegral}
    e^{-x} + e^x (-1 + 2 x) = 4 \int_0^{x} e^{y-x} u_1(y) \mathrm{d}y.
\end{equation}
\begin{equation}\label{eq:set1_involvedintegral}
    \frac{\mathrm{sin}(4 \pi^2 x^2)}{x} = \int_0^{x} e^{-10\left(x-\frac{1}{3}\right)^2 - 10\left(y - \frac{1}{3}\right)^2} u_2(y) \mathrm{d}y.
\end{equation}
The analytic solution to the first equation can be found to be:
\begin{equation*}
    u_1(x) = xe^x.
\end{equation*}
We present the absolute error between the analytic and numerical solution for $u_1(x)$ using the orthogonal polynomial method introduced in this paper in Figure \ref{fig:set12_errors}A for different matrix dimensions $n \times n$ and the absolute error between the numerical solution for $u_2(x)$ and a high degree solution computed with $n=5050$ in Figure \ref{fig:set12_errors}B.
\subsection{Set 2: Volterra integral equations of the second kind with oscillatory kernels}
We seek numerical solutions $u_1$, $u_2$ and $u_3$ to the following three Volterra integral equations of the second kind with kernels of varying oscillatory intensity:
\begin{align}
\label{eq:set2K1}  u_1(x) &= \tfrac{e^{-10\pi x}(1+20 \pi)-2+\mathrm{cos}(10\pi x)+\mathrm{sin}(10\pi x)}{20\pi} + \int_0^{x} \left(1-\mathrm{cos}\left( 10 \pi x-10 \pi y \right)\right) u_1(y) \mathrm{d}y\\ 
 \label{eq:set2K2}   u_2(x) &= \frac{e^\frac{x}{2}}{\pi} + \int_0^{x} \left( \mathrm{sin}(10\pi x)+\mathrm{cos}(10 \pi y) \right) u_2(y) \mathrm{d}y\\ 
    u_3(x) &= e^{x^2-2x} + \int_0^{1-x} \left( -2x + y + \mathrm{sin}(25x^2 + 8 \pi y) \right) u_3(y) \mathrm{d}y. \label{eq:set2K3}
\end{align}
Accurate approximation of these kernels on the canonical triangle domain requires coefficient vectors of length exceeding $10^3$. We include contour plots of the specified kernels on said domain in Figure \ref{fig:set2_contours}. One can find an analytic solution to the first equation:
\begin{equation*}
u_1(x) = e^{-10 \pi x}.
\end{equation*}
For the other two equations, we instead compare to a numerical solution of high degree ($n=5050$). We plot the absolute error convergence of the numerical solutions in Figure \ref{fig:set2_errors}. Due to the oscillatory character of these kernels and the number of coefficients involved, this can be considered a moderate stress test of the Clenshaw approach to the computations of the Volterra integral operator.
\subsection{Set 3: Singular Volterra integral equation of the second kind in heat conduction with mixed boundary conditions}
Finally we discuss a more application-oriented example discussed in a handful of different variations in \cite{diogo_hermite-type_1991,diogo_high_2004,diogo_numerical_2006,wazwaz_two_2016,baratella_nystrom_2009}:
\begin{equation}\label{eq:conductionexample}
    u(x) = g(x) + \int_0^{x} \frac{y^{\mu-1}}{x^\mu} u(y) \mathrm{d}y.
\end{equation}
To see how equations of this type can result from heat conduction problems of the form $\frac{\partial^2 u}{\partial x} - \frac{1}{\alpha^2} \frac{\partial u}{\partial y} = 0$ with mixed boundary conditions, see for example \cite{diogo_high_2004}. This equation varies both in its singularity properties as well as its number of solutions depending on the parameter $\mu$. This example equation stemming from an application of Volterra integrals demonstrates that the method developed in this paper has a broader range of applicability and can in some cases extend to certain classes of singular problems as well, despite this not being part of the considerations during the development of the method. For testing purposes we choose the following for $g(x)$:
\begin{align*}
    g_1(x) &= (1+x+x^2)\\
    g_2(x) &= \frac{(1+4 \pi^2 x^2) \mathrm{sinh}(2\pi x)-2\pi x \mathrm{cosh}(2\pi x)}{4 \pi^2 x^2}.
\end{align*}
The following analytic solutions to these equations can be found for general $\mu$ for $g_1$ (e.g. in \cite{wazwaz_two_2016}) and for $\mu=3$ for $g_2$:
\begin{align*}
    u_1(x,\mu) &= \frac{\mu}{\mu-1}+\frac{\mu+1}{\mu}x+\frac{\mu+2}{\mu+1}x^2,\\
    u_2(x,\mu=3) &= \mathrm{sinh}(2 \pi x).
\end{align*}
As the kernel is separable, the problem can instead be treated as
$$ x^\mu u(x) = x^\mu g(x) + \int_0^{x} y^{\mu-1} u(y) \mathrm{d}y,$$
which can be solved by appropriately adding multiplications with Jacobi operators or altering the supplied $g(x)$ in the method to solve Volterra integral equations of the second kind. We plot numerical solutions obtained for $g_1(x)$ with $\mu=7$ and $g_2(x)$ with $\mu=3$ in Figure \ref{fig:set3}. The naturally more error prone neighborhood of the singularity can be well approximated arbitrarily close to the singularity (though not at the exact point of the singularity itself) using higher values of $n$ if needed. For $g_2(x)$ the method shows no instability at the weak singularity of the kernel.
\begin{figure}[H]
    \centering
     \subfloat[]
    {{  \includegraphics[width=7cm]{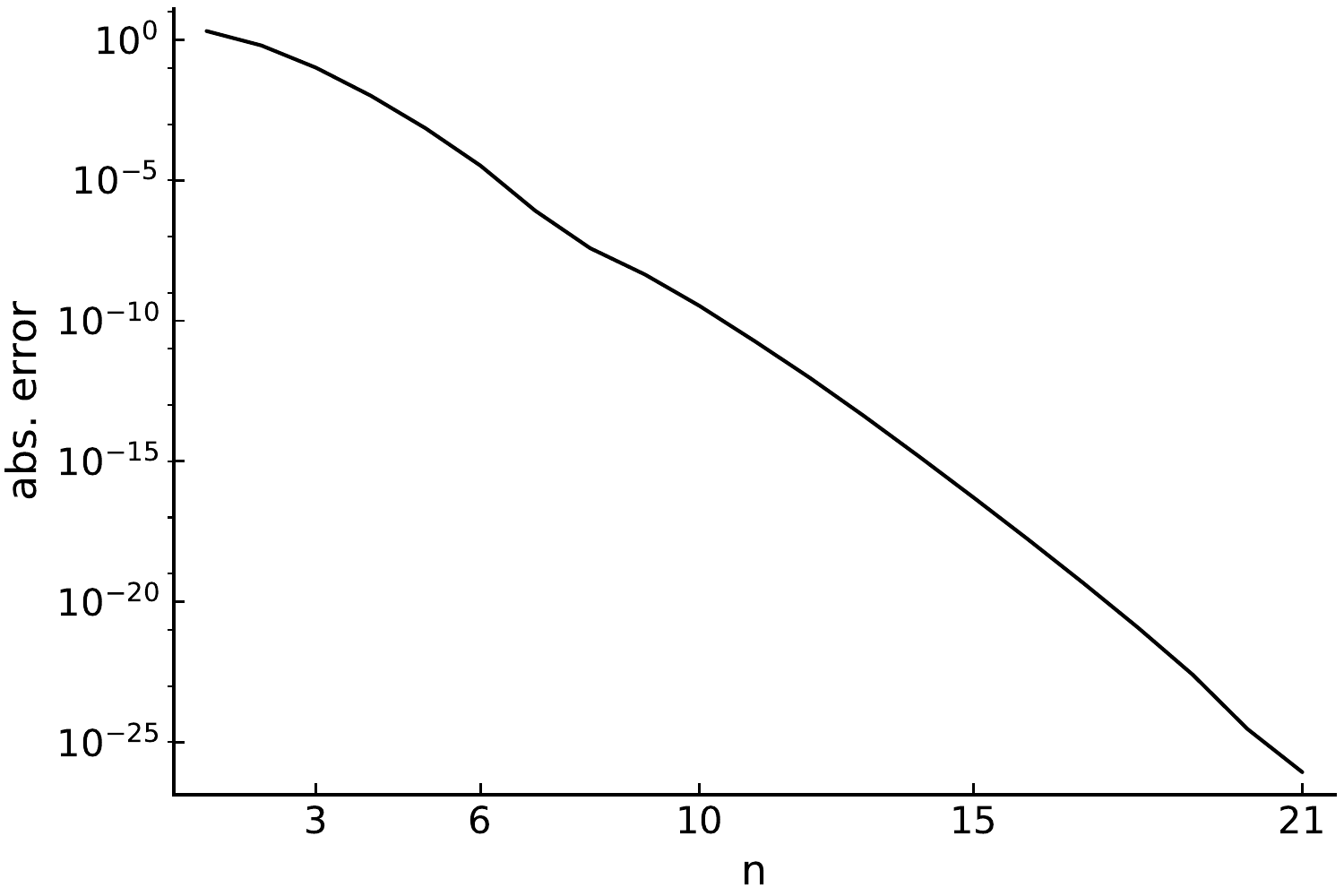} }}
       \qquad
     \subfloat[]
    {{  \includegraphics[width=7cm]{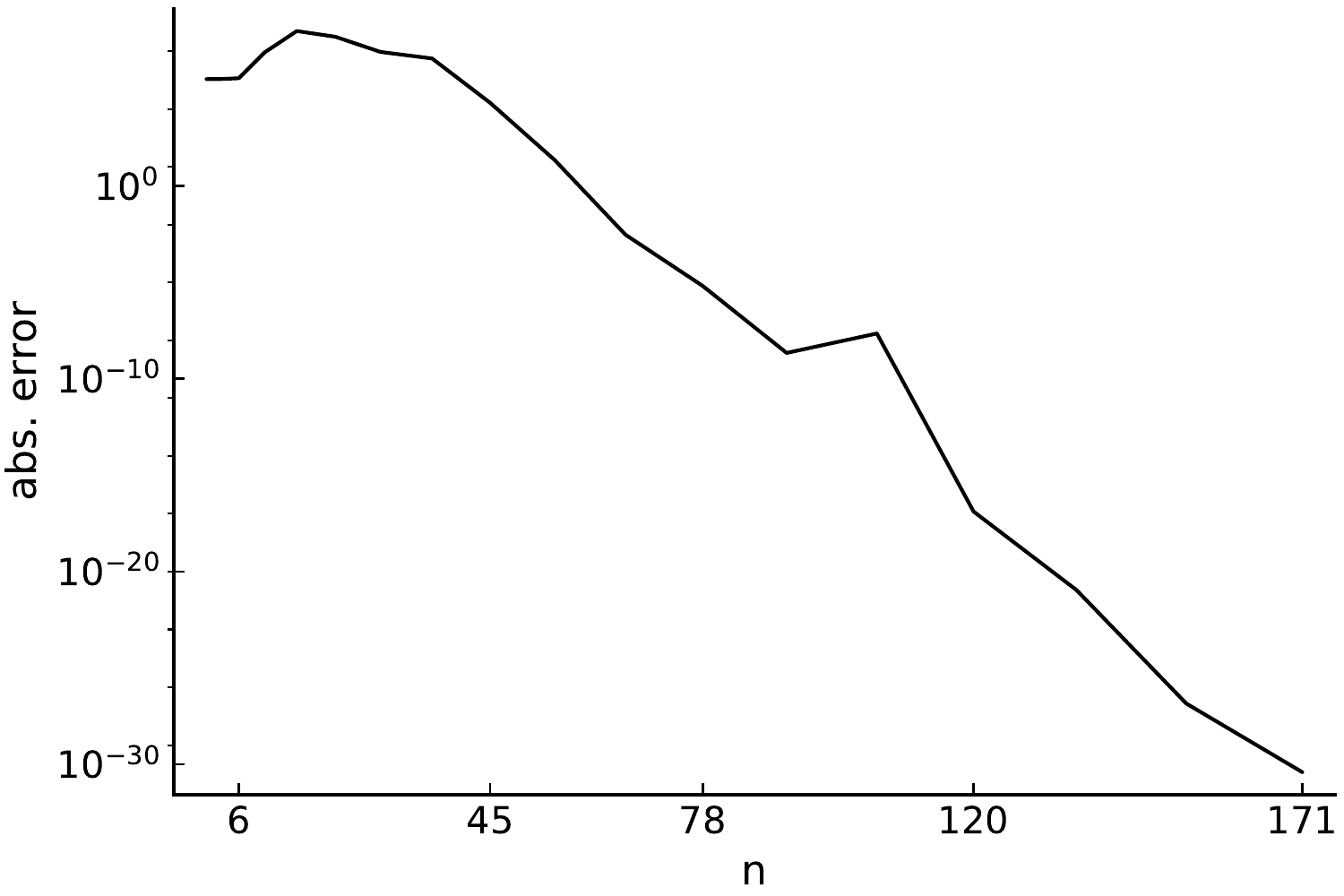} }}
           \qquad
    \caption{(A) shows absolute error between (\ref{eq:set1_analyticintegral}) and the known analytic solution while (B) compares (\ref{eq:set1_involvedintegral}) to a solution computed with $n=5050$. }%
    \label{fig:set12_errors}%
\end{figure}
\begin{figure}[H]
    \centering
     \subfloat[$K_1(x,y)$]
    {{  \includegraphics[width=4.2cm]{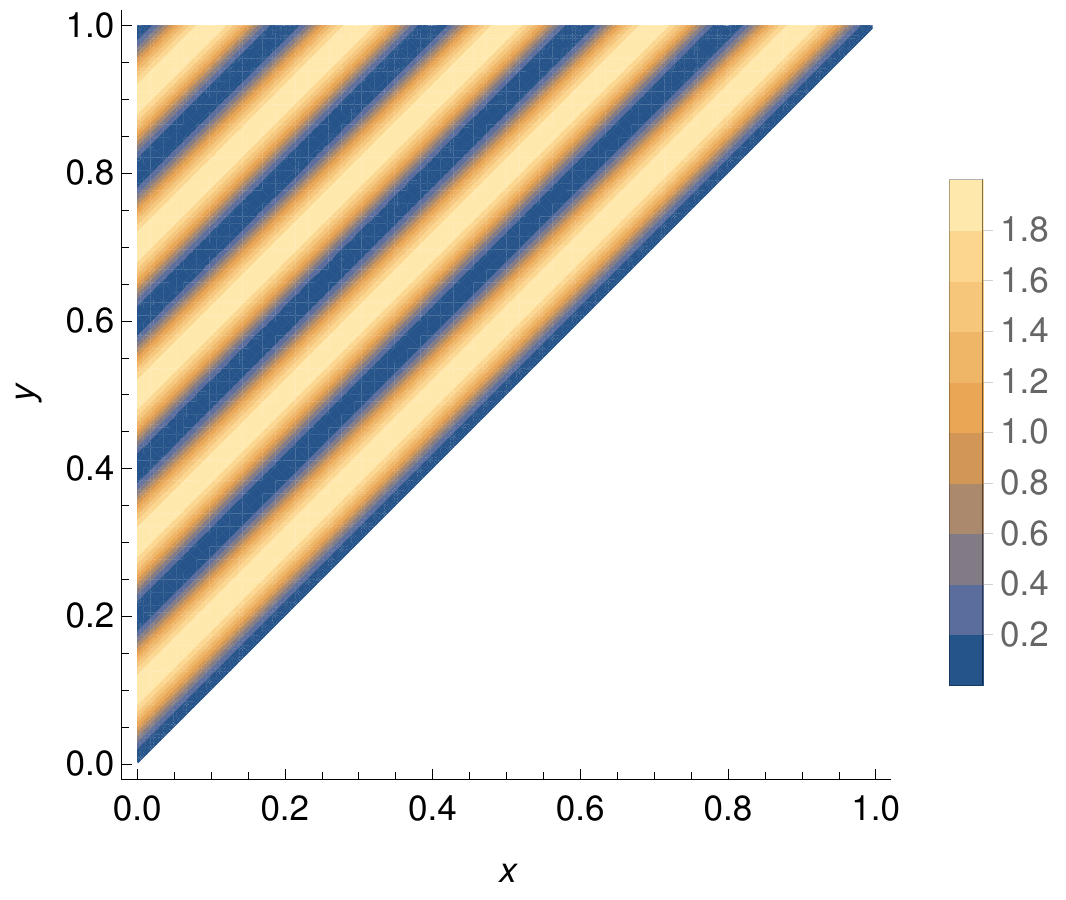} }}
       \qquad
     \subfloat[$K_2(x,y)$]
    {{  \includegraphics[width=4.2cm]{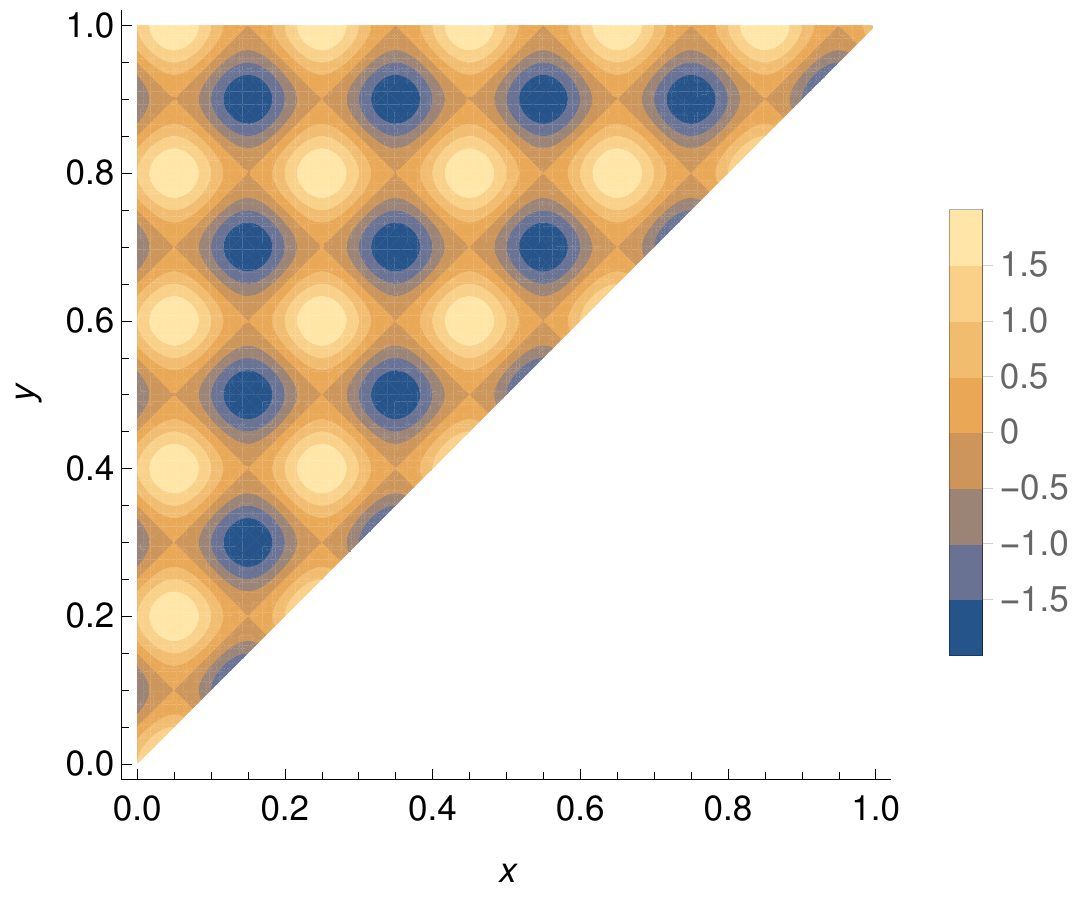} }}
           \qquad
    \subfloat[$K_3(x,y)$]
    {{  \includegraphics[width=4.2cm]{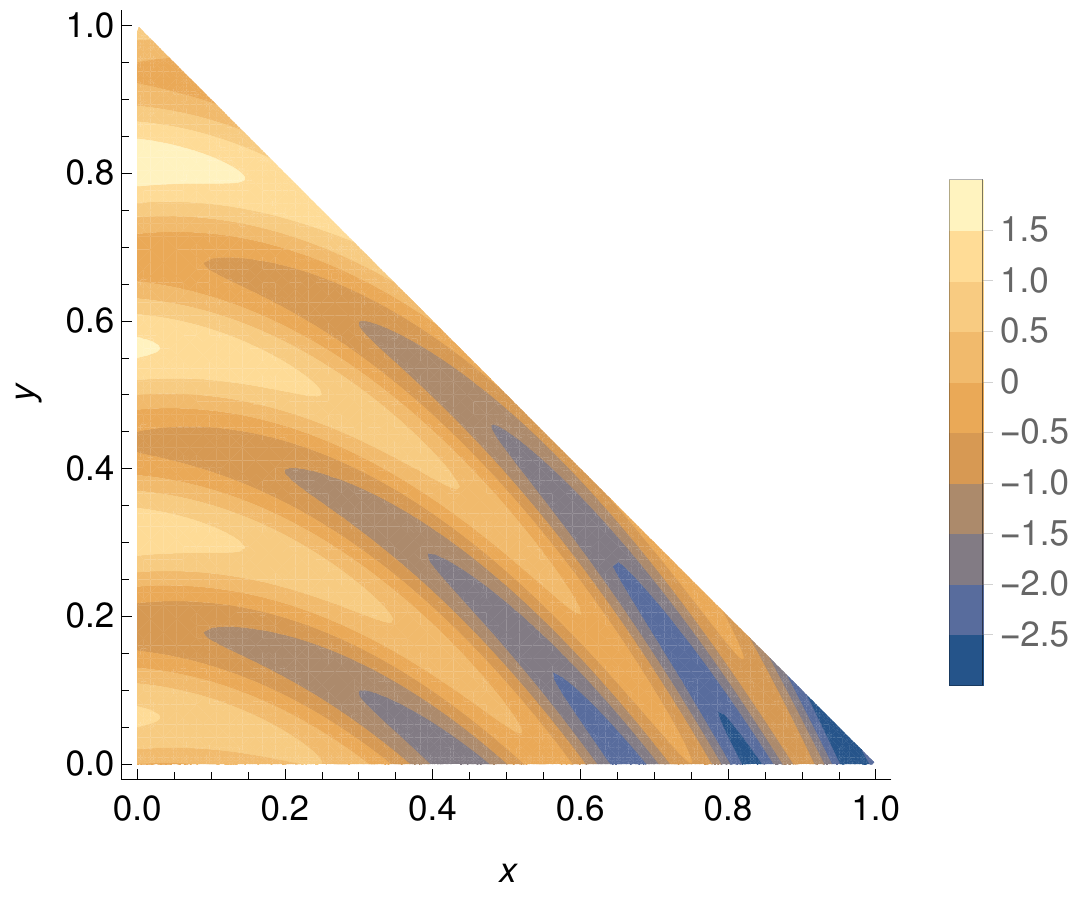} }}
           \qquad
    \caption{Contour plots of oscillatory kernels for equations (\ref{eq:set2K1}--\ref{eq:set2K3}) on their natural triangle domains.}%
    \label{fig:set2_contours}%
\end{figure}
\begin{figure}[H]
    \centering
     \subfloat[]
    {{  \includegraphics[width=7cm]{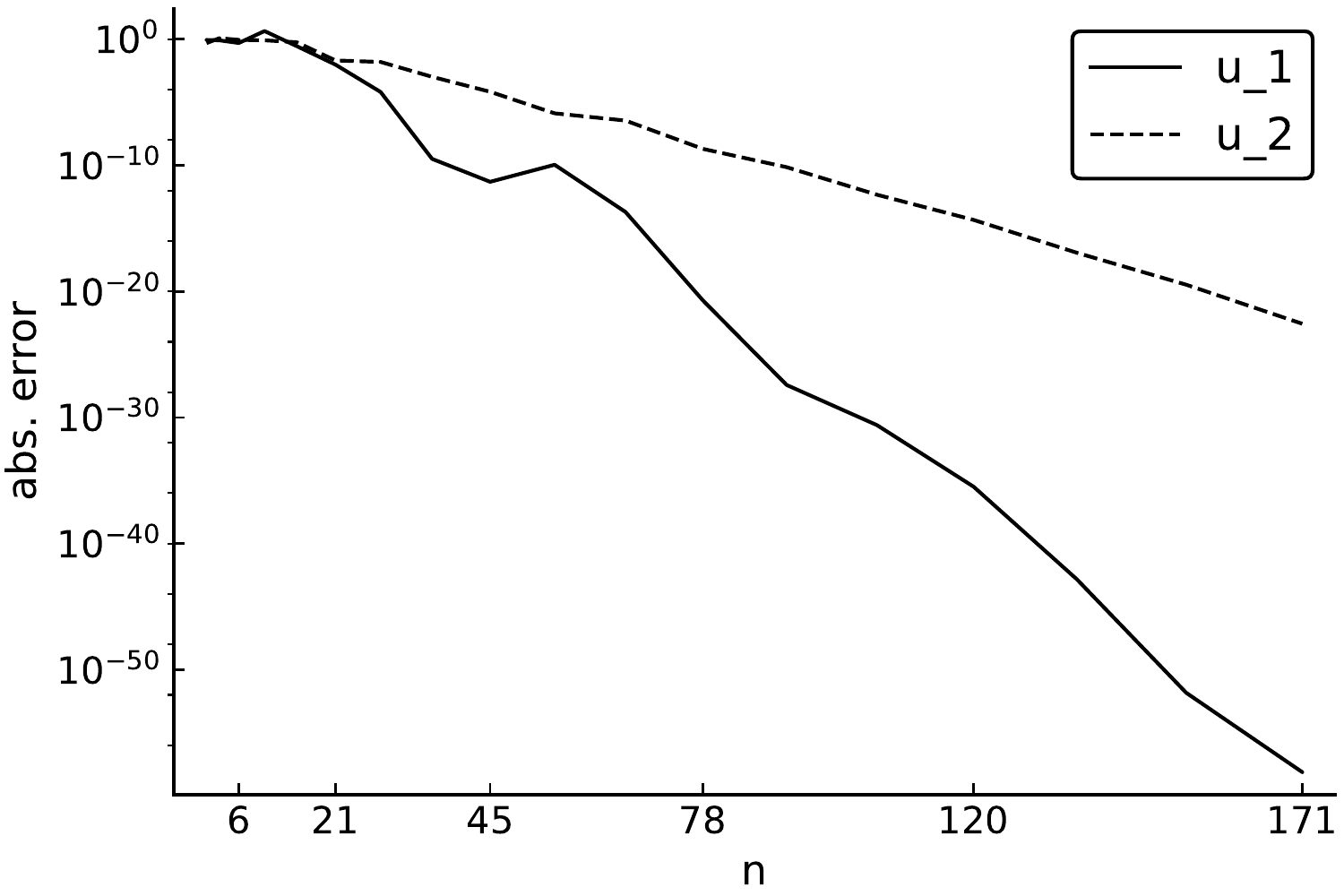} }}
           \qquad
    \subfloat[]
    {{  \includegraphics[width=7cm]{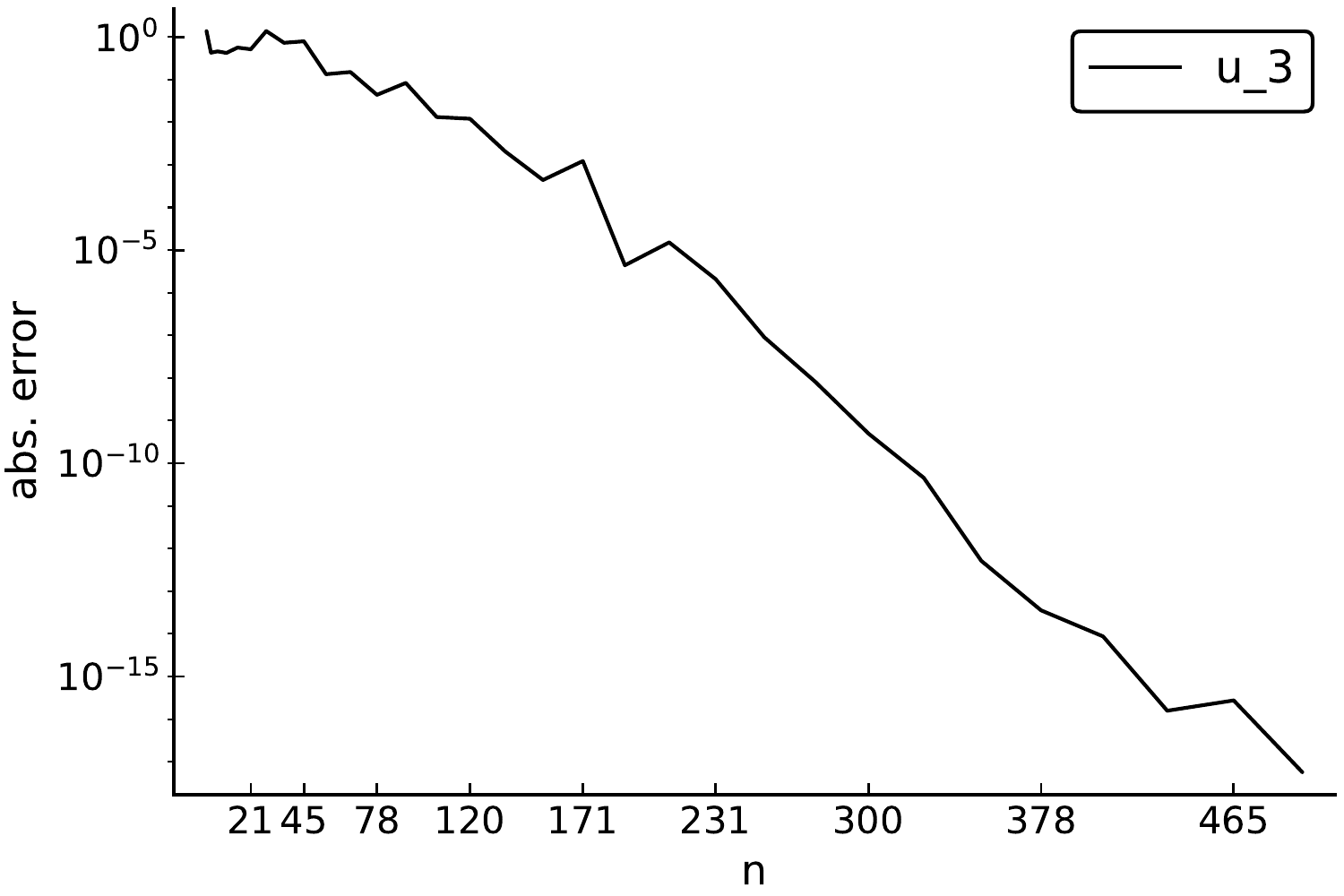} }}
           \qquad
    \caption{Absolute errors for equations (\ref{eq:set2K1}--\ref{eq:set2K3}). $u_1(x)$ is compared to the analytic solution, $u_2(x)$ and $u_3(x)$ are compared to a solution computed with $n=5050$.}%
    \label{fig:set2_errors}%
\end{figure}
\begin{figure}[H]
    \centering
     \subfloat[$g_1(x)$ with $\mu=7$]
    {{  \includegraphics[width=7cm]{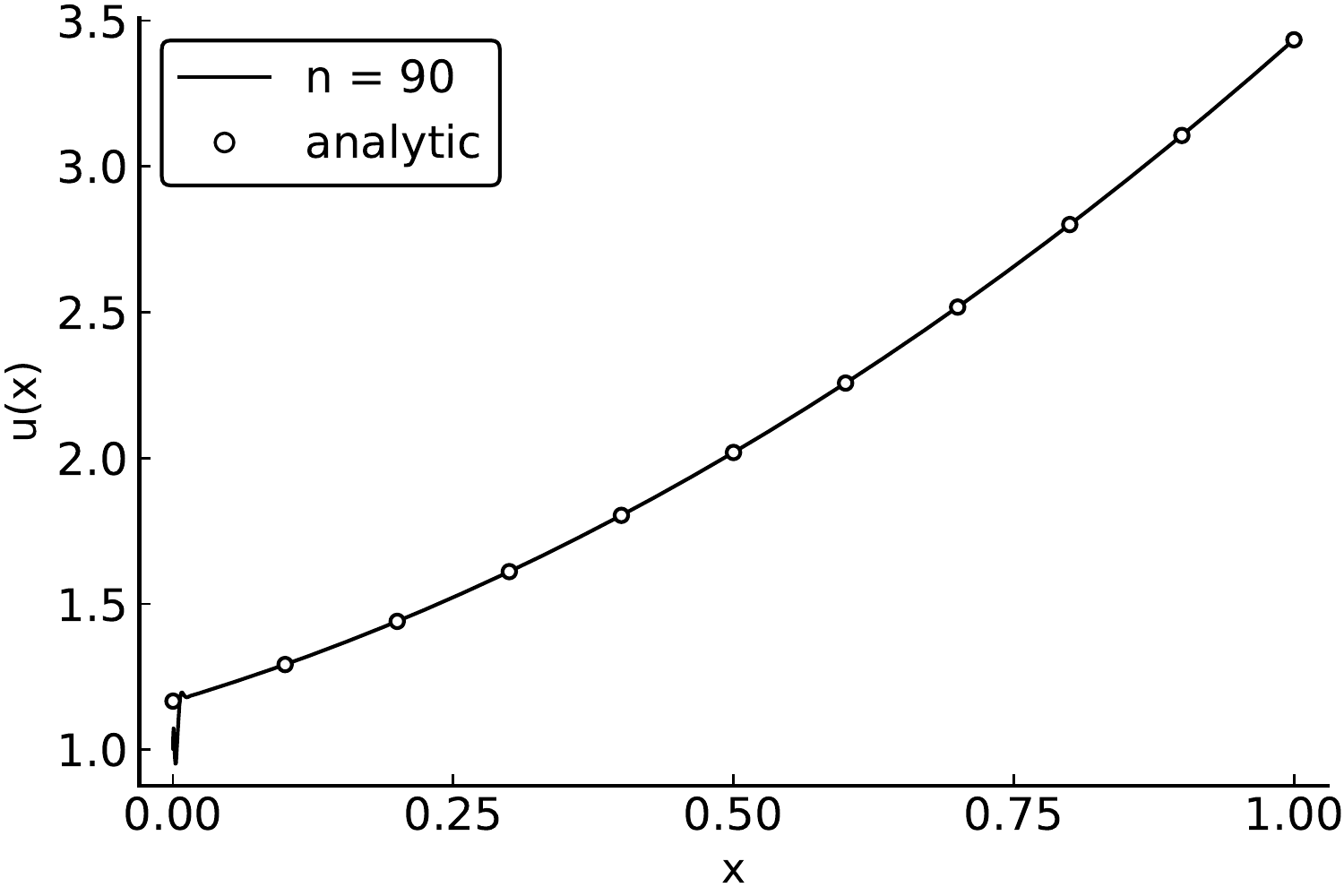} }}
       \qquad
     \subfloat[$g_2(x)$ with $\mu=3$]
    {{  \includegraphics[width=7cm]{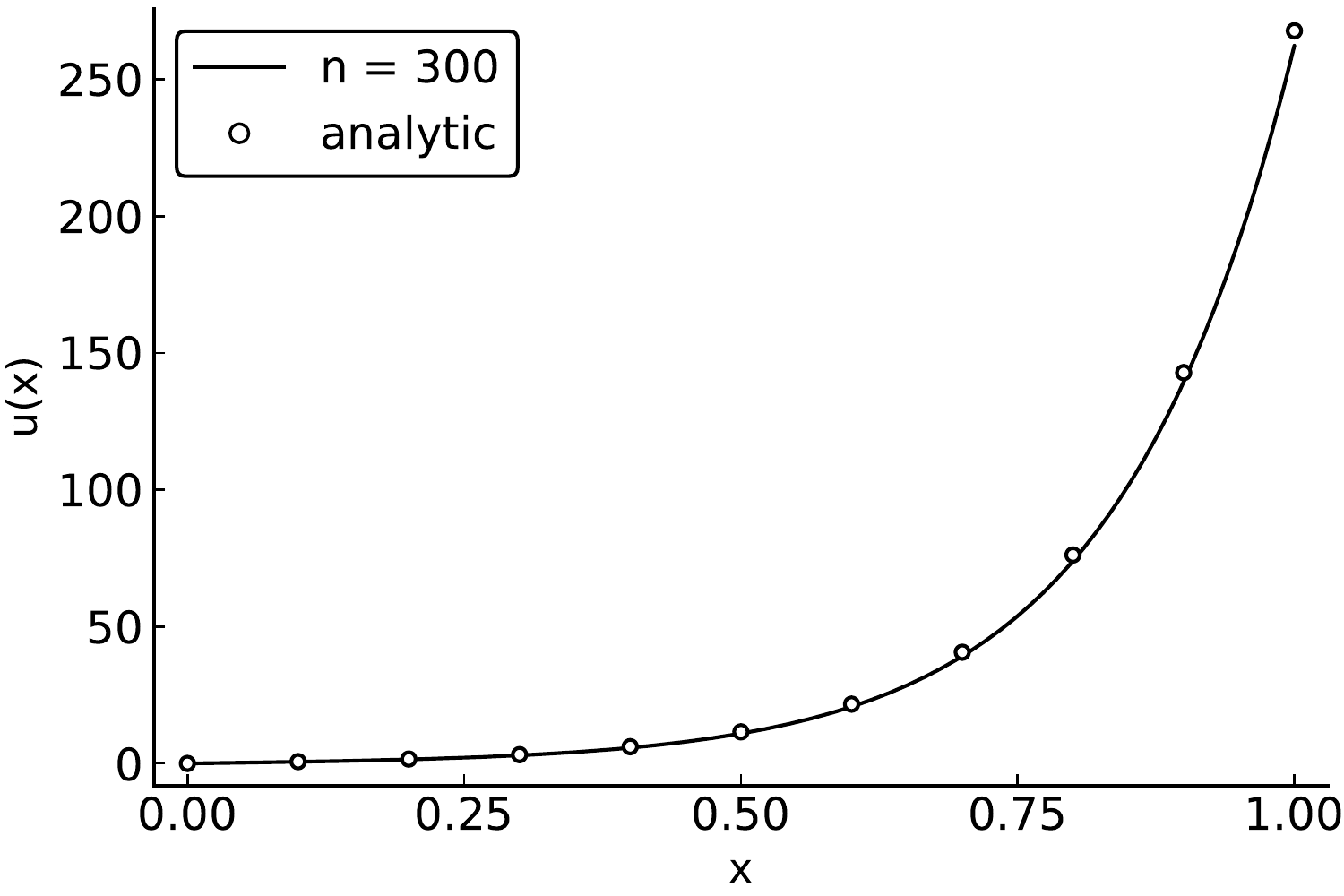} }}
           \qquad
    \caption{Numerical and analytic solutions to the problem in (\ref{eq:conductionexample}).}%
    \label{fig:set3}%
\end{figure}

\section{Stability and convergence of the method} \label{sec:analysis}
In this section we make use of the fact that the coefficient space of orthogonal polynomials is equivalent to an infinite-dimensional Banach space (in particular a sequence space). The strategy for the  analysis of the method is to show that the operators to be inverted for Volterra integral equations of the second kind can be written as compact perturbations of the identity (compare \cite{olver_fast_2013,slevinsky_singular_2017,lintner_generalized_2015}), i.e. can be written as
\begin{equation}  \label{eq:compactnesscriterion}
( \mathbb{1} + \mathcal{K}) u = g
\end{equation}
where $\mathcal{K}$ is compact. Operators of this form are either invertible or neither injective nor surjective by the Fredholm alternative, cf. \cite{bachman_functional_2000,lindenstrauss_classical_1996}. The assumption of well-posedness for the equation thus guarantees that an operator of this form is invertible and standard convergence results for finite section methods \cite{bottcher_analysis_2006} then guarantee convergence. We begin by discussing the solver for Volterra integral equations of the second kind, as the analysis for first kind problems is more involved.\\

\subsection{Equations of the second kind}
\begin{definition} 
We define the projection operators $\mathcal{P}_n: \ell^2 \rightarrow \ell^2$ which map a given coefficient vector to a truncated version of itself with non-zero entries for the first $n$ coefficients only.
\end{definition}

\begin{definition}
The {\it analysis operator} $\mathcal{E} : L^2(0,1) \rightarrow \ell^2$ is the inclusion of a square integrable function into the $\ell^2$ coefficient space of the complete basis of orthogonal Jacobi polynomials, which is guaranteed to exist by the Stone–Weierstrass theorem and is a bounded operator. The {\it synthesis operator} is its inverse $\mathcal{E}^{-1} : \ell^2 \rightarrow L^2(0,1)$, which is also bounded.
Note the terms analysis and synthesis are  terminology in frame theory  \cite{casazza_finite_2013,christensen_introduction_2003}.
\end{definition}

\begin{lemma}[]The coefficient space Volterra integral operator $\mathrm{V}_K$ is compact, where $\mathrm{V}_K: \ell^2 \rightarrow \ell^2$ for a given kernel $K(x,y) \in L^2[T^2]$ with limits of integration $0$ to $x$ acting on the coefficient vector Banach space $\ell^2$ of the Jacobi polynomials $\tilde{\mathbf{P}}^{(1,0)}(x)$ is of the form $$\mathrm{V}_K = \mathrm{L}_{(1,0)}^{(0,0)} \mathrm{Q}_y K(\mathbb{1}-\mathrm{J}_x,\mathrm{J}_y) \mathrm{E}_y,$$ with the respective operators defined as in section $\ref{sec:volterra}$.
\end{lemma}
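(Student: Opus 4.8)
The plan is to transfer compactness from the continuous Volterra operator $\mathcal{V}_K$ on $L^2(0,1)$, as defined in \eqref{eq:volterraintegral}, to its coefficient-space realisation $\mathrm{V}_K$ on $\ell^2$. By the construction in Section~\ref{sec:volterra}, $\mathrm{V}_K$ is precisely the matrix of the continuous map $u\mapsto \mathcal{V}_K u$ expressed between orthogonal polynomial bases: reading a coefficient vector $\mathbf{u}$ in the $\tilde{\mathbf{P}}^{(1,0)}(x)$ basis, the operator $\mathrm{E}_y$ places $u$ in the $y$ variable on the triangle, $K(\mathbb{1}-\mathrm{J}_x,\mathrm{J}_y)$ multiplies by the kernel (the reflection $x\mapsto 1-x$ in the first argument being encoded by $\mathbb{1}-\mathrm{J}_x$), $\mathrm{Q}_y$ integrates over $y$ and returns to the interval, and the weighted lowering operator $\mathrm{L}_{(1,0)}^{(0,0)}$ absorbs the leftover weight $\mathrm{W}_{\mathrm{Q}}=(1-x)$, so that $\tilde{\mathbf{P}}^{(0,0)}(x)^{\mathsf{T}}\mathrm{V}_K\mathbf{u}$ reproduces the Volterra integral. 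Abbreviating the bounded analysis and synthesis operators of the relevant bases by $\mathcal{E}$ and $\mathcal{E}^{-1}$, this says $\mathrm{V}_K=\mathcal{E}\,\mathcal{V}_K\,\mathcal{E}^{-1}$, up to the bounded reflection and basis-change maps already built into the construction.

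First I would make this identity rigorous. On the dense subspace of polynomials I would check $\tilde{\mathbf{P}}^{(0,0)}(x)^{\mathsf{T}}\mathrm{V}_K\mathbf{u}=\int_0^{1-x}K(1-x,y)u(y)\,\mathrm{d}y$ directly from the explicit forms of $\mathrm{Q}_y$, $\mathrm{E}_y$ and $\mathrm{L}_{(1,0)}^{(0,0)}$ obtained in Section~\ref{sec:volterra}, together with the commutation relations \eqref{eq:commutation1}--\eqref{eq:commutation2} and the reflection rule \eqref{eq:reflectbasis}. Since the Jacobi polynomials form a complete basis of $L^2(0,1)$ (the completeness invoked in the definition of the analysis operator), this polynomial identity extends by density and boundedness to all of $L^2(0,1)$, so that $\mathrm{V}_K$ is genuinely the conjugate of $\mathcal{V}_K$ by bounded operators.

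Next I would show $\mathcal{V}_K\colon L^2(0,1)\to L^2(0,1)$ is compact by exhibiting it as a Hilbert--Schmidt operator. Its integral kernel, extended by zero to the unit square, is $\tilde K(x,y)=K(1-x,y)\,\chi_{\{0\le y\le x\le 1\}}$, and
\begin{equation*}
\|\tilde K\|_{L^2([0,1]^2)}^2=\int_0^1\!\!\int_0^{x}|K(1-x,y)|^2\,\mathrm{d}y\,\mathrm{d}x=\|K\|_{L^2[T^2]}^2<\infty,
\end{equation*}
where the last equality uses the measure-preserving reflection $x\mapsto 1-x$ carrying $T^2$ onto $\{0\le y\le x\le 1\}$. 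Hence $\tilde K\in L^2([0,1]^2)$, so $\mathcal{V}_K$ is Hilbert--Schmidt and in particular compact.

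Finally, since the compact operators form a two-sided ideal in the bounded operators and $\mathcal{E}$, $\mathcal{E}^{-1}$ (together with the reflection and lowering maps) are bounded, the composition $\mathrm{V}_K=\mathcal{E}\,\mathcal{V}_K\,\mathcal{E}^{-1}$ is compact on $\ell^2$, as claimed. The main obstacle I anticipate is not the compactness estimate, which is routine once the kernel is seen to be square integrable, but the bookkeeping behind the conjugation identity: one must track that domain and codomain carry the distinct bases $\tilde{\mathbf{P}}^{(1,0)}$ and $\tilde{\mathbf{P}}^{(0,0)}$, confirm that $\mathbb{1}-\mathrm{J}_x$ and $\mathrm{L}_{(1,0)}^{(0,0)}$ together reproduce exactly the Volterra kernel with the correct limits, and verify boundedness of each building block so that the ideal property applies. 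A more computational alternative would be to bound $\sum_{m,n}|(\mathrm{V}_K)_{mn}|^2$ directly from the explicit operator entries, but the conjugation route avoids those entrywise estimates and makes the dependence on $\|K\|_{L^2[T^2]}$ transparent.
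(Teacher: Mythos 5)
Your proposal is correct and follows essentially the same route as the paper: conjugating the continuous Volterra operator by the bounded analysis and synthesis operators, invoking the Hilbert--Schmidt property of $\mathcal{V}_K$ on $L^2(0,1)$ for a kernel in $L^2[T^2]$, and concluding via the ideal property of compact operators. The only difference is that you spell out the Hilbert--Schmidt norm computation and the density argument for the conjugation identity, whereas the paper cites these as classical facts.
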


\begin{proof}
$\mathrm{V}_K = \mathrm{L}_{(1,0)}^{(0,0)} \mathrm{Q}_y K(\mathbb{1}-\mathrm{J}_x,\mathrm{J}_y) \mathrm{E}_y$ follows from the definition of the involved operators, see section \ref{sec:volterra}. To see compactness of $\mathrm{V}_K$ we consider the following diagram of functions between Banach spaces which represents the formalized version of the method:
\begin{figure}[H]
\centering
\begin{tikzpicture}
  \matrix (m) [matrix of math nodes,row sep=3em,column sep=4em,minimum width=2em]
  {
     L^2(0,1) & L^2(0,1) \\
     \ell^2 & \ell^2 \\};
  \path[-stealth]
    (m-1-1) edge node [left] {$\mathcal{E}$} (m-2-1)
            edge node [above] {$\mathcal{V}_K$} (m-1-2)
    (m-2-1.east|-m-2-2) edge node [below] {$\mathrm{V}_K$}
            node [above] {} (m-2-2)
     (m-2-2) edge node [right] {$\mathcal{E}^{-1}$} (m-1-2);
\end{tikzpicture}
\end{figure}
\noindent $\mathcal{V}_K$ for a kernel $K(x,y) \in L^2[T^2]$ is the Volterra integral operator for said kernel acting on $L^2(0,1)$. It is a classical result of functional analysis that such Volterra integral operators $\mathcal{V}_K$ are Hilbert–Schmidt operators and thus compact \cite{muscat_functional_2014}. It follows that $\mathrm{V}_K = \mathcal{E} \circ \mathcal{V}_K \circ \mathcal{E}^{-1}$ is a finite composition of bounded and compact operators between Banach spaces and hence itself compact.
%
%
\end{proof}

\begin{lemma}[] For $\mathrm{V}_K$ and $\mathcal{P}_n$ defined as above, we have   $$\lim_{n\rightarrow\infty}\|\mathrm{V}_K-\mathcal{P}_n\mathrm{V}_K\mathcal{P}_n^\mathsf{T}\| = 0.$$ \end{lemma}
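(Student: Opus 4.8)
The plan is to invoke the standard finite-section convergence theory for compact operators, leveraging the compactness of $\mathrm{V}_K$ established in the previous lemma. First I would record the elementary fact that $\mathcal{P}_n \to \mathbb{1}$ strongly: for any $x \in \ell^2$ one has $\|x - \mathcal{P}_n x\|^2 = \sum_{k>n} |x_k|^2 \to 0$ as $n \to \infty$. Since each $\mathcal{P}_n$ is the orthogonal truncation projection, it is self-adjoint, so $\mathcal{P}_n^\mathsf{T} = \mathcal{P}_n$ and $\|\mathcal{P}_n\| \le 1$ uniformly in $n$.

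Next I would split the operator difference by inserting an intermediate term:
\begin{equation*}
\mathrm{V}_K - \mathcal{P}_n \mathrm{V}_K \mathcal{P}_n = (\mathbb{1} - \mathcal{P}_n)\mathrm{V}_K + \mathcal{P}_n \mathrm{V}_K (\mathbb{1} - \mathcal{P}_n).
\end{equation*}
By the triangle inequality and $\|\mathcal{P}_n\| \le 1$, it then suffices to show that both $\|(\mathbb{1}-\mathcal{P}_n)\mathrm{V}_K\|$ and $\|\mathrm{V}_K(\mathbb{1}-\mathcal{P}_n)\|$ tend to zero. The second norm equals $\|(\mathbb{1}-\mathcal{P}_n)\mathrm{V}_K^*\|$ after passing to adjoints (using self-adjointness of $\mathcal{P}_n$), and $\mathrm{V}_K^*$ is compact whenever $\mathrm{V}_K$ is. Hence both quantities are of the same shape: the norm of $(\mathbb{1}-\mathcal{P}_n)$ composed with a compact operator, and it is enough to treat this single case.

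The heart of the argument — and the step I expect to carry the real content — is the claim that $\|(\mathbb{1}-\mathcal{P}_n)T\| \to 0$ for any compact $T$. I would prove this by the standard passage from strong to uniform convergence on a precompact set. The image $T(B)$ of the closed unit ball $B$ has compact closure, so given $\varepsilon > 0$ one may cover $\overline{T(B)}$ by finitely many $\varepsilon$-balls centred at points $y_1,\dots,y_m$. Choosing $N$ so that $\|(\mathbb{1}-\mathcal{P}_n)y_j\| < \varepsilon$ for all $j$ and all $n \ge N$, and comparing an arbitrary $Tx$ with $\|x\|\le 1$ to its nearest centre (using $\|\mathbb{1}-\mathcal{P}_n\| \le 2$), one obtains $\|(\mathbb{1}-\mathcal{P}_n)Tx\| \le 3\varepsilon$; taking the supremum over $x$ yields $\|(\mathbb{1}-\mathcal{P}_n)T\| \le 3\varepsilon$ for $n \ge N$. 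Applying this to $T = \mathrm{V}_K$ and $T = \mathrm{V}_K^*$ and recombining through the splitting above completes the proof. The only genuine subtlety is this upgrade of pointwise to uniform convergence over $\overline{T(B)}$, which is precisely where the compactness of $\mathrm{V}_K$ enters; everything else is routine.
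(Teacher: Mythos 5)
Your proof is correct and is essentially the paper's argument written out in full: the paper disposes of this lemma in one line by citing the compactness of $\mathrm{V}_K$ together with the approximation property of the Hilbert space $\ell^2$, and your splitting $\mathrm{V}_K-\mathcal{P}_n\mathrm{V}_K\mathcal{P}_n=(\mathbb{1}-\mathcal{P}_n)\mathrm{V}_K+\mathcal{P}_n\mathrm{V}_K(\mathbb{1}-\mathcal{P}_n)$ plus the $\varepsilon$-net upgrade from strong to uniform convergence on $\overline{T(B)}$ is precisely the standard proof underlying that citation. The only cosmetic remark is that $\|\mathbb{1}-\mathcal{P}_n\|\le 1$ (it is itself an orthogonal projection), though your bound of $2$ is of course harmless.
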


\begin{proof}
 This follows directly from the compactness of $\mathrm{V}_K$ and the fact that $\ell^2$ is a Hilbert space and thus has the approximation property \cite{lindenstrauss_classical_1996}.  \end{proof}
 
The above lemma justifies referring to the finite-dimensional projections $\mathcal{P}_n\mathrm{V}_K\mathcal{P}_n^\mathsf{T}$ of the Volterra operator as approximations. 
 
\begin{lemma}[]$\mathrm{S}_{(0,0)}^{(1,0)} \mathrm{R}  \mathrm{L}_{(1,0)}^{(0,0)} \mathrm{Q}_y K(\mathbb{1}-\mathrm{J}_x,\mathrm{J}_y) \mathrm{E}_y$ is compact on $\ell^2$ and thus Volterra integral equations of the second kind can be written in the form $(\mathbb{1} + \mathcal{K}) \mathbf{u} = \mathbf{g}$ with $\mathcal{K}$ compact. \end{lemma}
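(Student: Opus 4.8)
The plan is to recognise the stated operator as the composition $\mathrm{S}_{(0,0)}^{(1,0)} \mathrm{R}\, \mathrm{V}_K$, where $\mathrm{V}_K = \mathrm{L}_{(1,0)}^{(0,0)} \mathrm{Q}_y K(\mathbb{1}-\mathrm{J}_x,\mathrm{J}_y) \mathrm{E}_y$ is exactly the coefficient-space Volterra operator already shown to be compact in the first lemma of this subsection. Since the composition of a bounded operator with a compact operator is again compact, it suffices to verify that the two remaining factors $\mathrm{R}$ and $\mathrm{S}_{(0,0)}^{(1,0)}$ are bounded on $\ell^2$. I would first track the bases so that the composition is well defined and its codomain matches the identity: $\mathrm{V}_K$ maps $(1,0)$-coefficients to $(0,0)$-coefficients, $\mathrm{R}$ acts on the $(0,0)$ (Legendre) coefficients it produces, and $\mathrm{S}_{(0,0)}^{(1,0)}$ returns to the $\tilde{\mathbf{P}}^{(1,0)}$ basis, so the whole operator acts on $\ell^2$ in the $\tilde{\mathbf{P}}^{(1,0)}$ basis, consistent with the $\mathbb{1}$ appearing in the second-kind equation of Section \ref{sec:methodvolterraIE}.

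Next I would dispose of the reflection $\mathrm{R}$, which in the present composition acts on the Legendre coefficients output by $\mathrm{V}_K$. By (\ref{eq:reflectbasis}) with $\alpha=\beta=0$ it is simply the diagonal operator $f_n \mapsto (-1)^n f_n$, hence a surjective $\ell^2$-isometry and in particular bounded; more generally the reflection is bounded because $f(x)\mapsto f(1-x)$ is an isometry of $L^2(0,1)$ whose weight is merely relabelled $(\alpha,\beta)\leftrightarrow(\beta,\alpha)$.

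The key step is the boundedness of the raising operator $\mathrm{S}_{(0,0)}^{(1,0)}$, and I would argue it from what the operator represents rather than from its explicit bidiagonal entries. It sends the Legendre ($(0,0)$, weight $1$) coefficients of a function to its $(1,0)$ (weight $1-x$) coefficients, so in the two orthonormal bases it realises the inclusion $L^2(0,1) \hookrightarrow L^2\bigl((0,1),(1-x)\,\mathrm{d}x\bigr)$. Because $0 \le 1-x \le 1$ on $[0,1]$, one has $\int_0^1 |f|^2 (1-x)\,\mathrm{d}x \le \int_0^1 |f|^2\,\mathrm{d}x$, so this inclusion is a contraction and $\mathrm{S}_{(0,0)}^{(1,0)}$ is bounded with norm at most one on $\ell^2$.

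With both $\mathrm{R}$ and $\mathrm{S}_{(0,0)}^{(1,0)}$ bounded and $\mathrm{V}_K$ compact, the product $\mathcal{K} := -\,\mathrm{S}_{(0,0)}^{(1,0)} \mathrm{R}\, \mathrm{V}_K$ is compact, and the second-kind equation becomes $\bigl(\mathbb{1} - \mathrm{S}_{(0,0)}^{(1,0)} \mathrm{R}\, \mathrm{V}_K\bigr)\mathbf{u} = (\mathbb{1} + \mathcal{K})\mathbf{u} = \mathbf{g}$, which is precisely the compact-perturbation-of-the-identity form (\ref{eq:compactnesscriterion}). I expect the raising-operator step to be the only real obstacle: the explicit matrix entries of $\mathrm{S}_{(0,0)}^{(1,0)}$ do not make $\ell^2$-boundedness visually apparent, so the clean route is the weighted-$L^2$ inclusion argument above rather than a direct estimate on matrix entries, after which the functional-analytic conclusion is routine.
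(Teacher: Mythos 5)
Your proof is correct and follows essentially the same route as the paper: factor the operator as $\mathrm{S}_{(0,0)}^{(1,0)}\mathrm{R}\,\mathrm{V}_K$ with $\mathrm{V}_K$ the compact coefficient-space Volterra operator from the preceding lemma, and conclude by composing a compact operator with bounded ones. The only point of divergence is the boundedness of the raising operator: the paper appeals to its explicit entries from the Jacobi recurrence relations \cite[18.9.5]{nist_2018}, whereas your realisation of $\mathrm{S}_{(0,0)}^{(1,0)}$ as the contraction $L^2(0,1)\hookrightarrow L^2\bigl((0,1),(1-x)\,\mathrm{d}x\bigr)$ between orthonormal bases is a cleaner, normalisation-independent way to obtain the same bound (with norm at most one).
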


\begin{proof}
The operators $\mathrm{S}_{(0,0)}^{(1,0)}$ and $\mathrm{R}$ acting on the Banach space $\ell^2$ can both readily be seen to be bounded operators from their definitions  from the Jacobi polynomial's recurrence relationships \cite[18.9.5]{nist_2018}. The result then follows from the observation that the Volterra integral operator $\mathrm{L}_{(1,0)}^{(0,0)} \mathrm{Q}_y K(\mathbb{1}-\mathrm{J}_x,\mathrm{J}_y) \mathrm{E}_y$ was shown to be compact and composition of bounded operators with a compact operator yields a compact operator.
\end{proof}

An analogous chain of arguments immediately establishes:

\begin{lemma}[] The Volterra integral operator for the limits $0$ to $1-x$ is compact and can be written as $$\mathrm{V}_K = (\mathbb{1}-\mathrm{J}) \mathrm{Q}_y K(\mathrm{J}_x,\mathrm{J}_y) \mathrm{E}_y.$$ The method is thus also of the form in  (\ref{eq:compactnesscriterion}).
\end{lemma}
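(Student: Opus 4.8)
The plan is to transplant, essentially verbatim, the three-step argument already carried out for the $0$ to $x$ case, adjusting only the limits of integration and the resulting operator decomposition. First I would read off the operator form $\mathrm{V}_K = (\mathbb{1}-\mathrm{J}) \mathrm{Q}_y K(\mathrm{J}_x,\mathrm{J}_y) \mathrm{E}_y$ directly from the construction in Section \ref{sec:volterra} together with the derivation for different limits of integration in Section \ref{sec:methodvolterraIE}. The decisive simplification relative to the $0$ to $x$ case is that the limits $0$ to $1-x$ already agree with the natural action of $\mathrm{Q}_y$ on the canonical triangle, so neither the reflection operator $\mathrm{R}$ nor the lowering/raising pair $\mathrm{L}_{(1,0)}^{(0,0)}$, $\mathrm{S}_{(0,0)}^{(1,0)}$ is needed; the kernel is assembled from the unreflected Jacobi operator $\mathrm{J}_x$ rather than $\mathbb{1}-\mathrm{J}_x$, and the weight $\mathrm{W}_\mathrm{Q}=(1-x)$ enters through left multiplication by the bounded operator $(\mathbb{1}-\mathrm{J})$.

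Next I would establish compactness by the same commutative-diagram argument used for the first lemma. Let $\mathcal{V}_K : L^2(0,1) \rightarrow L^2(0,1)$ denote the continuous Volterra operator with upper limit $1-x$. Its integral kernel is precisely $K$ restricted to the region $\{0 \le y \le 1-x\}$, which is the canonical triangle $T^2$, extended by zero elsewhere; since $K \in L^2[T^2]$ by hypothesis, this kernel lies in $L^2((0,1)^2)$, so $\mathcal{V}_K$ is Hilbert--Schmidt and hence compact. I would then identify the coefficient-space operator with the conjugate $\mathrm{V}_K = \mathcal{E} \circ \mathcal{V}_K \circ \mathcal{E}^{-1}$, using the commutation relations (\ref{eq:commutation1}--\ref{eq:commutation2}) exactly as before to justify that $\mathrm{Q}_y$, $\mathrm{E}_y$, $\mathrm{J}_x$, $\mathrm{J}_y$ and $(\mathbb{1}-\mathrm{J})$ assemble into the coefficient representation of $\mathcal{V}_K$. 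Because $\mathcal{E}$ and $\mathcal{E}^{-1}$ are bounded, $\mathrm{V}_K$ is a finite composition of bounded and compact operators and is therefore compact; equivalently, if one prefers to peel off the weight, $(\mathbb{1}-\mathrm{J})$ is a bounded multiplication operator and composing it with the compact core $\mathrm{Q}_y K(\mathrm{J}_x,\mathrm{J}_y)\mathrm{E}_y$ again yields a compact operator.

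Finally, to exhibit the form (\ref{eq:compactnesscriterion}), I would note that the operator inverted for second kind equations with these limits is $\mathbb{1} - \mathrm{V}_K$, as recorded in the derivation for different limits of integration in Section \ref{sec:methodvolterraIE}. Setting $\mathcal{K} = -\mathrm{V}_K$ gives $(\mathbb{1} + \mathcal{K})\mathbf{u} = \mathbf{g}$ with $\mathcal{K}$ compact, so the Fredholm-alternative and finite-section convergence results invoked earlier in the section apply without change.

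I expect no genuine obstacle, which is exactly why the statement is labelled as following by an analogous chain of arguments. The single point that warrants a line of verification is that changing the upper limit from $x$ to $1-x$ preserves the Hilbert--Schmidt property of the underlying $L^2$ operator; here this is if anything cleaner than the $0$ to $x$ case, since the integration region for the $1-x$ limit is literally the triangle $T^2$ on which $K$ is already assumed square integrable, so no reflection of the kernel is required and the Hilbert--Schmidt norm is finite by hypothesis.
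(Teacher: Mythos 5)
Your proposal is correct and is precisely the ``analogous chain of arguments'' the paper alludes to without writing out: you read off the operator form from Section \ref{sec:methodvolterraIE}, establish compactness by conjugating the Hilbert--Schmidt (hence compact) continuous Volterra operator with the bounded analysis and synthesis operators, and note that boundedness of $(\mathbb{1}-\mathrm{J})$ preserves compactness, exactly mirroring the earlier lemmas. Your closing observation that the $0$ to $1-x$ case is cleaner because the integration region is literally $T^2$ is a correct and worthwhile check.
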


\begin{corollary}[] The method described in section \ref{sec:methodvolterraIE} converges like $\| \mathbf{u} - \mathcal{P}_n \mathbf{u} \| \rightarrow 0$  as $n\rightarrow \infty$ for well-posed Volterra integral equations of the second kind. \end{corollary}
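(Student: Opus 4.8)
The plan is to recognise the stated convergence as an instance of the classical finite section (projection) method for a compact perturbation of the identity, and to obtain stability from norm convergence of the truncated operators rather than from any spectral estimate. By the preceding lemma expressing the second-kind equation in the form $(\mathbb{1}+\mathcal{K})\mathbf{u}=\mathbf{g}$, the operator $\mathcal{K}=-\mathrm{S}_{(0,0)}^{(1,0)}\mathrm{R}\,\mathrm{L}_{(1,0)}^{(0,0)}\mathrm{Q}_y K(\mathbb{1}-\mathrm{J}_x,\mathrm{J}_y)\mathrm{E}_y$ (or $\mathcal{K}=-(\mathbb{1}-\mathrm{J})\mathrm{Q}_y K(\mathrm{J}_x,\mathrm{J}_y)\mathrm{E}_y$ for the limits $0$ to $1-x$) is compact. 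First I would invoke well-posedness: it forces $A:=\mathbb{1}+\mathcal{K}$ to be injective, so the Fredholm alternative (already cited above) upgrades this to invertibility of $A$ with a bounded inverse $A^{-1}$ on $\ell^2$.

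Next I would pass from the exact operator to its finite truncations. Since $\mathcal{K}$ is compact and the orthogonal projections $\mathcal{P}_n$ converge strongly to $\mathbb{1}$ on $\ell^2$ (and are self-adjoint, so $\mathcal{P}_n^\mathsf{T}=\mathcal{P}_n\to\mathbb{1}$ strongly as well), the same approximation-property argument used above for $\mathrm{V}_K$ gives
\begin{equation*}
\lim_{n\to\infty}\big\|\mathcal{K}-\mathcal{P}_n\mathcal{K}\mathcal{P}_n^\mathsf{T}\big\|=0.
\end{equation*}
I would then embed the finite sections back into $\ell^2$ by setting $B_n:=\mathbb{1}+\mathcal{P}_n\mathcal{K}\mathcal{P}_n^\mathsf{T}$, so that $\|A-B_n\|=\|\mathcal{K}-\mathcal{P}_n\mathcal{K}\mathcal{P}_n^\mathsf{T}\|\to 0$. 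This reduces the whole question to a perturbation statement about a single norm-convergent sequence $B_n\to A$.

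The convergence then follows from two standard facts: the set of invertible operators is open and inversion is norm-continuous on it. Because $A$ is invertible and $\|A-B_n\|\to 0$, there is an $N$ with $B_n$ invertible for all $n\ge N$ and $\|B_n^{-1}-A^{-1}\|\to 0$; in particular $\sup_{n\ge N}\|B_n^{-1}\|<\infty$, which is exactly stability of the scheme. The numerical method solves $\mathcal{P}_n A\mathcal{P}_n^\mathsf{T}\mathbf{u}_n=\mathcal{P}_n\mathbf{g}$ for $\mathbf{u}_n\in\mathrm{ran}\,\mathcal{P}_n$; since $(\mathbb{1}-\mathcal{P}_n)\mathbf{u}_n=0$ this is equivalent to $B_n\mathbf{u}_n=\mathcal{P}_n\mathbf{g}$, whence $\mathbf{u}_n=B_n^{-1}\mathcal{P}_n\mathbf{g}$. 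Using $\mathcal{P}_n\mathbf{g}\to\mathbf{g}$ together with $B_n^{-1}\to A^{-1}$ I would conclude
\begin{equation*}
\big\|\mathbf{u}-\mathbf{u}_n\big\|=\big\|A^{-1}\mathbf{g}-B_n^{-1}\mathcal{P}_n\mathbf{g}\big\|\longrightarrow 0,
\end{equation*}
and since $\mathbf{u}\in\ell^2$ trivially satisfies $\|\mathbf{u}-\mathcal{P}_n\mathbf{u}\|\to 0$, the stated conclusion follows.

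The hard part is the stability bound $\sup_n\|B_n^{-1}\|<\infty$: without it one controls only the truncation $\mathcal{P}_n\mathbf{u}$ of the exact solution, not the solution of the truncated system. The key point I want to stress is that compactness of $\mathcal{K}$ is precisely what makes this manageable—it converts the merely strong convergence $\mathcal{P}_n\to\mathbb{1}$ into the norm convergence $B_n\to A$, after which stability is automatic from invertibility of $A$. For a general bounded (non-compact) $\mathcal{K}$ the finite section method need not be stable, so the compactness lemmas established earlier are doing the essential work.
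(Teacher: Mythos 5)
Your proposal is correct and follows essentially the same route as the paper, which simply cites the standard finite section theory for compact perturbations of the identity (Fredholm alternative for invertibility of $\mathbb{1}+\mathcal{K}$, norm convergence of the truncations via compactness, then stability from openness of the invertible operators); you have just written out in full the argument the paper delegates to the reference on finite section methods. Your closing observation that compactness is what upgrades strong convergence of $\mathcal{P}_n$ to norm convergence of $B_n$ is exactly the point the paper's earlier lemmas are establishing.
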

\begin{proof}
As the method is of the form in (\ref{eq:compactnesscriterion}), i.e. $( \mathbb{1} + \mathcal{K}) \mathbf{u} = \mathbf{g}$ with $\mathcal{K}$ compact, the result is a corollary of the above results combined with the known invertibility and convergence properties for problems of this form in finite section methods, see e.g. \cite{bottcher_analysis_2006}.
\end{proof}

\subsection{Equations of the first kind}
The Fredholm alternative and Neumann series arguments underlying the proofs above break down for first kind problems as the Volterra operator $\mathrm{V}_K: \ell^2 \rightarrow \ell^2$ is compact  on the infinite dimensional Banach space $\ell^2$ and therefore is strictly singular, cf. \cite{bachman_functional_2000}. Thus, while the finite dimensional approximations $\mathrm{V}_n$ of the Volterra operator may have an inverse $\mathrm{V}_n^{-1}$, it is not obvious that $\mathbf{u}_n = \mathrm{V}_n^{-1} \mathbf{q}$ converges to $\mathbf{u}$ in the limit. The problem can be made well-posed, however, if one considers the Volterra operator as a map between two different appropriately chosen Banach spaces. Under sufficient continuity assumptions as well as the assumption that a given Volterra integral equation of the first kind has a solution, this problem may then be salvaged by finding a preconditioner which allows us to rewrite it as a problem involving operators which are compact perturbations of Toeplitz operators. We begin by assuming a polynomial kernel from where an extension argument directly yields that it also applies for the non-polynomial case. Note that in this section we will prove convergence of the method only for the case of limits of integration $0$ to $1-x$. This is not a limitation for the case of integral equations of the first kind, since solving
\begin{equation*}
     \int_0^{t} K(t,y) u(y) \mathrm{d}y = g(t).
\end{equation*}
and
\begin{equation*}
     \int_0^{1-x} K(1-x,y) u(y) \mathrm{d}y = g(1-x).
\end{equation*}
are formally equivalent, as solving one automatically solves the other with $t=1-x$. The reason for the particular choice for our proofs is that some arguments are more clear in this variant. Furthermore, as the monomial expansion and Clenshaw algorithm based Volterra operators are  exactly the same for polynomial kernels the analysis will make use of the simpler structure of the former. \\
To discuss invertibility for equations of the first kind we need to reframe the Volterra operator as a a map between two different Banach spaces, which are similar in spirit to Sobolev spaces.

\begin{definition} Let $\ell^2_{\lambda}$ with $\lambda \geq 0$ denote the Banach space with norm $$ \|\mathbf{u}\|_{\ell^2_\lambda} = \sqrt{\sum_{n=0}^\infty \left((1+n)^\lambda |u_n| \right)^2} < \infty.$$ \end{definition}

Any $\mathbf{u} \in \ell^2_\lambda$ corresponds uniquely to a $\mathbf{u} \in \ell^2$ so we have $\ell^2_\lambda \subset \ell^2$ whereas the converse is clearly not the case.

\begin{lemma}\label{theorem:firstkindfullvoltlemma} Let  $\mathrm{V}_K: \ell^2 \rightarrow \ell^2_1$ denote the Volterra operator in coefficient space of $\tilde{\mathbf{P}}^{(1,0)}(x)$ with limits of integration $0$ to $1-x$ for a given polynomial kernel 
$$
K(x,y)=\sum_{n=0}^M \sum_{j=0}^n k_{nj} x^{n-j} y^{j}.
$$
Then $$ \mathrm{V}_K = (\mathbb{1}-\mathrm{J})\mathrm{D}\left(\mathrm{D}^{-1} \sum_{n=0}^M \sum_{j=0}^n k_{nj} \mathrm{J}^{n-j} \mathrm{D} \mathrm{J}^j \right),$$ with $\mathrm{D} = \mathrm{Q}_y \mathrm{E}_y$, $\mathrm{D}: \ell^2 \rightarrow \ell^2_1$ and $\mathrm{D}^{-1}: \ell^2_1 \rightarrow \ell^2$.
\end{lemma}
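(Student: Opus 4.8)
The plan is to obtain the factored identity directly from the explicit representation of the Volterra operator derived in Section~\ref{sec:volterra}, and then to read off the stated mapping properties from the explicit diagonal of $\mathrm{D}$. First I would recall that for limits of integration $0$ to $1-x$ the Volterra operator is $\mathrm{V}_K=(\mathbb{1}-\mathrm{J})\mathrm{Q}_y K(\mathrm{J}_x,\mathrm{J}_y)\mathrm{E}_y$. For the monomial kernel $K(x,y)=\sum_{n=0}^M\sum_{j=0}^n k_{nj}x^{n-j}y^j$ I would substitute $K(\mathrm{J}_x,\mathrm{J}_y)=\sum_{n,j}k_{nj}\mathrm{J}_x^{n-j}\mathrm{J}_y^j$ and push $\mathrm{Q}_y$ and $\mathrm{E}_y$ through the Jacobi operators using the commutation relations (\ref{eq:commutation1}--\ref{eq:commutation2}), which convert every $\mathrm{J}_x$ into a left factor $\mathrm{J}$ and every $\mathrm{J}_y$ into a right factor $\mathrm{J}$. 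This reproduces the monomial formula of Section~\ref{sec:kernelcomp}, namely $\mathrm{Q}_y K(\mathrm{J}_x,\mathrm{J}_y)\mathrm{E}_y=\sum_{n,j}k_{nj}\mathrm{J}^{n-j}\mathrm{D}\mathrm{J}^j$ with $\mathrm{D}=\mathrm{Q}_y\mathrm{E}_y$.

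Since $\mathrm{D}$ is diagonal with entries $(\mathrm{D})_{n,n}=(-1)^{n+1}/n\neq 0$ it is invertible, so I may insert $\mathbb{1}=\mathrm{D}\mathrm{D}^{-1}$ in front of the double sum to obtain $\mathrm{V}_K=(\mathbb{1}-\mathrm{J})\mathrm{D}(\mathrm{D}^{-1}\sum_{n,j}k_{nj}\mathrm{J}^{n-j}\mathrm{D}\mathrm{J}^j)$, which is the claimed identity. As an algebraic statement this is purely formal; its purpose is to isolate the parenthesised operator $\mathrm{D}^{-1}\sum_{n,j}k_{nj}\mathrm{J}^{n-j}\mathrm{D}\mathrm{J}^j$, which acts on $\ell^2$ and is the object that is later analysed as a compact perturbation of a Toeplitz operator, while the prefactor $(\mathbb{1}-\mathrm{J})\mathrm{D}$ carries the map into the smaller space $\ell^2_1$.

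The substantive content is therefore the mapping properties, which follow from the decay $|(\mathrm{D})_{n,n}|=1/n$. For $\mathrm{D}:\ell^2\to\ell^2_1$ I would estimate $\|\mathrm{D}\mathbf{u}\|_{\ell^2_1}^2=\sum_n (1+n)^2 n^{-2}|u_n|^2$ and use that $(1+n)^2/n^2$ is uniformly bounded, so this is controlled by $\|\mathbf{u}\|_{\ell^2}^2$; for $\mathrm{D}^{-1}:\ell^2_1\to\ell^2$ I would use $\|\mathrm{D}^{-1}\mathbf{v}\|_{\ell^2}^2=\sum_n n^2|v_n|^2\le\sum_n(1+n)^2|v_n|^2=\|\mathbf{v}\|_{\ell^2_1}^2$. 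To confirm the full composite sends $\ell^2$ to $\ell^2_1$ I would track the domains: $\mathrm{J}$ is tridiagonal with bounded (convergent) recurrence coefficients, hence bounded both on $\ell^2$ and on $\ell^2_1$, since the weight $(1+n)$ changes only by a bounded factor under a single index shift. Thus $\mathrm{J}^j:\ell^2\to\ell^2$, then $\mathrm{D}:\ell^2\to\ell^2_1$, then $\mathrm{J}^{n-j}:\ell^2_1\to\ell^2_1$, so the inner sum maps $\ell^2\to\ell^2_1$, the factor $\mathrm{D}^{-1}$ returns it to $\ell^2$, and finally $(\mathbb{1}-\mathrm{J})\mathrm{D}:\ell^2\to\ell^2_1$.

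The main obstacle is not any single computation but the bookkeeping of the weighted spaces: one must verify that the decay exponent of the diagonal of $\mathrm{D}$ matches precisely the weight exponent $\lambda=1$ defining $\ell^2_1$, so that $\mathrm{D}$ and $\mathrm{D}^{-1}$ form a bounded isomorphism pair between $\ell^2$ and $\ell^2_1$, and that $\mathrm{J}$ genuinely preserves $\ell^2_1$ so that the parenthesised operator is a well-defined bounded map on $\ell^2$. Getting these space assignments exactly right is what makes the factorisation meaningful for the first-kind analysis that follows.
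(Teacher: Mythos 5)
Your proposal is correct and follows essentially the same route as the paper: it invokes the monomial-kernel formula from Section~\ref{sec:kernelcomp} together with the commutation relations to get $\mathrm{V}_K=(\mathbb{1}-\mathrm{J})\sum_{n,j}k_{nj}\mathrm{J}^{n-j}\mathrm{D}\mathrm{J}^j$, inserts $\mathrm{D}\mathrm{D}^{-1}$, and reads off the mapping properties from the diagonal entries $(-1)^{n+1}/n$ of $\mathrm{D}$. The only difference is that you spell out the boundedness estimates for $\mathrm{D}$, $\mathrm{D}^{-1}$ and $\mathrm{J}$ on the weighted spaces, which the paper merely asserts; this is a welcome addition rather than a deviation.
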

\begin{proof}
That $\mathrm{D} = \mathrm{Q}_y \mathrm{E}_y$ is diagonal with entries $\frac{(-1)^{n+1}}{n}$ is due to properties of the Jacobi polynomials, see section \ref{sec:volterra} as well as \cite[18.6.1 and 18.17.1]{nist_2018}. The important observation to make is that  $\mathrm{D}$ can be thought of as $\mathrm{D}: \ell^2 \rightarrow \ell^2_1$, which makes $\mathrm{D}$ a bounded and invertible operator with $\mathrm{D}^{-1}: \ell^2_1 \rightarrow \ell^2$. With $\mathrm{V}_K$ and $K(x,y)$ as above, we thus have $$ \mathrm{V}_K = (\mathbb{1}-\mathrm{J}) \sum_{n=0}^M \sum_{j=0}^n \mathrm{J}^{n-j} \mathrm{D} \mathrm{J}^j = (\mathbb{1}-\mathrm{J}) \mathrm{D}\left(\mathrm{D}^{-1} \sum_{n=0}^M \sum_{j=0}^n k_{nj} \mathrm{J}^{n-j} \mathrm{D} \mathrm{J}^j \right),$$ via Section \ref{sec:kernelcomp}.
\end{proof}

\begin{definition} When solving Volterra integral equations of the first kind with the method described in Section \ref{sec:methodvolterraIE}, it is useful to distinguish the operator without the weight $(1-x)$ which is to be inverted from the full Volterra operator. We will denote this operator $\tilde{\mathrm{V}}_K: \ell^2 \rightarrow \ell^2_1$, where
$$ (\mathbb{1}-\mathrm{J}) \tilde{\mathrm{V}}_K  = \mathrm{V}_K.$$
We furthermore see that
 $$ \tilde{\mathrm{V}}_K = \mathrm{D}\left(\mathrm{D}^{-1} \sum_{n=0}^M \sum_{j=0}^n k_{nj} \mathrm{J}^{n-j} \mathrm{D} \mathrm{J}^j \right).$$
as an immediate corollary of Lemma \ref{theorem:firstkindfullvoltlemma}.
\end{definition}

\begin{lemma}
$\tilde{\mathrm{V}}_K$ may be written as $$\tilde{\mathrm{V}}_K = \mathrm{D}(\mathrm{T}[f]+\mathcal{K}),$$ where $\mathrm{T}[f]$ is a Toeplitz operator with symbol $f$ and $\mathcal{K}$ is compact. Furthermore, the symbol  is uniquely determined by the coefficients of the polynomial kernel $K(x,y)=\sum_{n=0}^M \sum_{j=0}^n k_{nj} x^{n-j} y^{j}$ to be 
$$ 
f(z) = \sum_{n=0}^M \sum_{j=0}^n k_{nj} \cos^{2n}\left(\frac{\theta}{2}\right)\qquad \hbox{where}\qquad z = {\rm e}^{{\rm i} \theta}.
$$
\end{lemma}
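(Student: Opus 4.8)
The plan is to strip off the outer diagonal factor $\mathrm{D}$ and prove that the bracketed operator $\mathrm{D}^{-1}\sum_{n=0}^M\sum_{j=0}^n k_{nj}\mathrm{J}^{n-j}\mathrm{D}\mathrm{J}^j$ equals $\mathrm{T}[f]+\mathcal{K}$ with $\mathcal{K}$ compact; it then suffices to identify its Toeplitz symbol modulo the ideal of compact operators. I would first record two structural facts. The Jacobi operator $\mathrm{J}$ for $\tilde{\mathbf{P}}^{(1,0)}(x)$ is a compact perturbation of a tridiagonal Toeplitz operator: the orthonormal three-term recurrence coefficients of the shifted Jacobi polynomials converge to their Legendre limits (diagonal $\to\tfrac12$, off-diagonals $\to\tfrac14$), so $\mathrm{J}=\mathrm{T}[a]+\mathcal{K}$ with $a(\theta)=\tfrac12+\tfrac12\cos\theta=\cos^2(\theta/2)$ and $z=e^{i\theta}$. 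Second, on the algebra of Toeplitz-plus-compact operators with continuous symbols the symbol map $b\mapsto\mathrm{T}[b]$ is multiplicative modulo the compacts, $\mathrm{T}[b]\mathrm{T}[c]=\mathrm{T}[bc]+\mathcal{K}$, a standard fact in the finite-section theory of Toeplitz operators \cite{bottcher_analysis_2006}. Together these give $\mathrm{J}^k=\mathrm{T}[a^k]+\mathcal{K}=\mathrm{T}[\cos^{2k}(\theta/2)]+\mathcal{K}$ for every $k$.

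The crux of the argument, and what I expect to be the main obstacle, is controlling the conjugation of the banded powers of $\mathrm{J}$ by the diagonal operator $\mathrm{D}$, whose entries are $\tfrac{(-1)^{n+1}}{n}$. I would split $\mathrm{D}=\mathrm{S}\Gamma$ into its alternating-sign part $\mathrm{S}=\mathrm{diag}((-1)^n)$ and its positive, slowly decaying magnitude part $\Gamma$ with $\Gamma_{nn}\sim 1/n$. The magnitude part is harmless: for any operator $B$ that is banded with entries tending to a constant along each fixed diagonal, the difference $\Gamma^{-1}B\Gamma-B$ has $(p,q)$-entry $B_{pq}(q/p-1)\to 0$ on each diagonal and is therefore compact, so $\Gamma$-conjugation is invisible modulo $\mathcal{K}$. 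The delicate part is the alternating sign, since $(\mathrm{S}\,\mathrm{T}[b]\,\mathrm{S})_{pq}=(-1)^{p-q}\mathrm{T}[b]_{pq}$ does not tend to $\mathrm{T}[b]_{pq}$ along the off-diagonals; this conjugation implements the substitution $z\mapsto-z$ in the symbol and so must be tracked rather than absorbed into the compact remainder. Verifying precisely how the signs from $\mathrm{D}^{-1}$ and $\mathrm{D}$ interact across the intervening power $\mathrm{J}^{n-j}$, and identifying the resulting Toeplitz symbol of $\mathrm{D}^{-1}\mathrm{J}^{n-j}\mathrm{D}\,\mathrm{J}^j$, is the step requiring the most care.

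Assembling these pieces, each term $\mathrm{D}^{-1}\mathrm{J}^{n-j}\mathrm{D}\,\mathrm{J}^j$ reduces modulo $\mathcal{K}$ to a single Toeplitz operator by the multiplicativity of the symbol map, and summing against the coefficients $k_{nj}$ produces $\mathrm{T}[f]$ with $f$ as stated, the accumulated error being a finite sum of compact operators and hence compact. Uniqueness of the symbol is automatic, because a continuous-symbol Toeplitz operator is compact if and only if its symbol vanishes, so the symbol map descends to an injection on the Calkin quotient. Finally, I would remove the polynomial-kernel hypothesis by a density argument: approximating a general $K\in L^2[T^2]$ by polynomials, using the boundedness and continuity of $K\mapsto\tilde{\mathrm{V}}_K$ together with continuity of the symbol in the kernel coefficients, and passing to the limit to conclude that the Toeplitz-plus-compact decomposition persists.
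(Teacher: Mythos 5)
Your overall route is the same as the paper's: peel off the diagonal factor $\mathrm{D}$, write $\mathrm{J}=\mathrm{T}[a]+\mathcal{K}$ with $a(z)=\tfrac12+\tfrac{z+\bar z}{4}=\cos^2(\theta/2)$, and use multiplicativity of the symbol map modulo the compacts to handle products and sums. Up to that point what you say is correct, and your splitting $\mathrm{D}=\mathrm{S}\Gamma$, with the observation that $\Gamma$-conjugation of a banded asymptotically Toeplitz operator is a compact perturbation, is actually a cleaner way to organise the conjugation step than the paper's own proof, which simply asserts that $\mathrm{D}^{-1}\mathrm{J}^{n-j}\mathrm{D}$ and $\mathrm{J}^{n-j}$ have the same Toeplitz part.

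The gap is that you stop at exactly the step you yourself identify as the crux. You correctly note that conjugation by $\mathrm{S}=\mathrm{diag}((-1)^n)$ implements $z\mapsto-z$ on symbols and ``must be tracked rather than absorbed,'' but your assembly paragraph never tracks it: you jump straight to ``produces $\mathrm{T}[f]$ with $f$ as stated.'' Carrying out your own program gives $\mathrm{D}^{-1}\mathrm{J}^{n-j}\mathrm{D}=\Gamma^{-1}\bigl(\mathrm{S}\,\mathrm{J}^{n-j}\,\mathrm{S}\bigr)\Gamma=\mathrm{T}\bigl[a(-z)^{n-j}\bigr]+\mathcal{K}=\mathrm{T}\bigl[\sin^{2(n-j)}(\theta/2)\bigr]+\mathcal{K}$, since only the middle factor $\mathrm{J}^{n-j}$ is sandwiched between $\mathrm{D}^{-1}$ and $\mathrm{D}$ while the right-hand factor $\mathrm{J}^j$ is not; the term-by-term symbol is then $\sin^{2(n-j)}(\theta/2)\cos^{2j}(\theta/2)$, not $\cos^{2n}(\theta/2)$. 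So your argument, completed honestly, does not land on the stated $f$. You must either show why the alternating-sign conjugation is, after all, invisible at the level of the Toeplitz part (this is precisely what the paper asserts, without the $\mathrm{S}\Gamma$ splitting and without justification), or accept that the symbol is the mixed $\sin$--$\cos$ expression above; as written, the proposal asserts both that the sign must be tracked and that the answer is unaffected by it, and proves neither. Two smaller points: the uniqueness claim via the Calkin quotient is fine, but the closing density argument for non-polynomial kernels does not belong to this lemma, which is stated for polynomial $K$ only --- the paper handles that extension in a separate, later lemma.
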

\begin{proof} 
From the Lemma \ref{theorem:firstkindfullvoltlemma} we see that the first statement is equivalent to the claim that
$$\sum_{n=0}^M \sum_{j=0}^n k_{nj} \mathrm{D}^{-1} \mathrm{J}^{n-j} \mathrm{D} \mathrm{J}^j$$ 
is of the form $\mathrm{T}+\mathcal{K}$ and thus asymptotically Toeplitz. To show this we need two observations: First, under sufficient continuity assumptions for the kernel, which are satisfied due to the kernel being polynomial, we have that
\begin{equation}\label{eq:contsymbols}
T[a]T[b] = T[ab] - H[a]H[\bar b],
\end{equation} 
and in particular
$$ T[a]T[a] = T[a^2] - H[a]H[\bar a],$$
where $H[a]$, $H[\bar a]$ and $H[\bar b]$ are compact Hankel operators \cite{bttcher_introduction_1999}.
Thus any asymptotically Toeplitz operator (of sufficiently continuous symbol) raised to a finite power is again an asymptotically Toeplitz operator, as $(T+\mathcal{K})^2 = T^2 + T \mathcal{K} + \mathcal{K}T + \mathcal{K}^2$ and $T^2$ is again Toeplitz plus something compact via the above relation. The composition of bounded operators with compact operators is compact making $T \mathcal{K} + \mathcal{K}T + \mathcal{K}^2$ compact. An induction argument demonstrates that this is true for any power $n\in \mathbb{N}$. In particular, since it is known that $\mathrm{J}$ is a compact perturbation of a Toeplitz operator \cite{nist_2018} we know that $\mathrm{J}^j$ is a compact perturbation of a Toeplitz operator as well. The second observation is that for the banded operator $\mathrm{J}^{n-j}$, the operator $\mathrm{D}^{-1}\mathrm{J}^{n-j} \mathrm{D} $ is also a compact perturbation of a Toeplitz operator and in fact we have that $\mathrm{J}^{n-j}$ and $\mathrm{D}^{-1}\mathrm{J}^{n-j} \mathrm{D}$ differ only in their compact part, i.e. have the same Toeplitz component. Via (\ref{eq:contsymbols}) we thus have that $\sum_{n=0}^M \sum_{j=0}^n k_{nj} \mathrm{D}^{-1} \mathrm{J}^{n-j} \mathrm{D} \mathrm{J}^j$ is of the form $(\mathrm{T}+\mathcal{K})$ and thus asymptotically Toeplitz.\\
Along with the above observations, Equation (\ref{eq:contsymbols}) tells us that we can compute the symbol of the Toeplitz part of a product of operators which are compact perturbations of Toeplitz operators if we know the symbols of the individual Toeplitz components. Due to bandedness it is straightforward to confirm that the symbol of the Toeplitz part of the multiplication operator $\mathrm{J}$ is $(\frac{1}{2}+\frac{z}{4}+\frac{\bar z}{4}) = \cos^2\left(\frac{\theta}{2}\right)$ for the Jacobi polynomials $\tilde{\mathbf{P}}^{(1,0)}(x)$, which is thus also the symbol of the Toeplitz part of $ \mathrm{D}^{-1} \mathrm{J} \mathrm{D} $. Note at this point that $$ \left(\mathrm{D}^{-1} \mathrm{J} \mathrm{D} \right)^{n-j} =  \mathrm{D}^{-1} \mathrm{J}^{n-j} \mathrm{D}$$ due to the outer operators cancelling. Given these tools as well as the linearity of the Fourier series it follows that the symbol of the Toeplitz part of the Volterra operator $\tilde{\mathrm{V}}_K$ is the linear combination
$$ f(z) = \sum_{n=0}^M \sum_{j=0}^n k_{nj} \cos^{2n}\left(\frac{\theta}{2}\right).$$\end{proof}
\begin{theorem}[] The method described in Section \ref{sec:methodvolterraIE} converges for well-posed Volterra integral equations of the first kind with limits of integration $0$ to $1-x$
$$ \mathrm{V}_K \mathbf{u} = \mathbf{g}, $$
rewritten as
$$ \tilde{\mathrm{V}}_K \mathbf{u} = \mathbf{q}, $$
with $q(x) = \frac{g(x)}{1-x}$ for a polynomial kernel $K(x,y) \in L^2[T^2]$ and with $\mathbf{q} \in \ell^2_1$, subject to the symbol of the Toeplitz part of $\tilde{\mathrm{V}}_K$ not vanishing on the complex unit circle. This condition is fulfilled if and only if $\forall x \in [0,1]: K(x,x)\neq 0$. \end{theorem}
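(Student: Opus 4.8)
The plan is to reduce the convergence question to the finite section theory for Toeplitz-plus-compact operators on $\ell^2$, exploiting the factorisation $\tilde{\mathrm{V}}_K = \mathrm{D}(\mathrm{T}[f]+\mathcal{K})$ from the preceding lemma. Since $\mathrm{D}=\mathrm{Q}_y\mathrm{E}_y$ is a diagonal isomorphism $\ell^2\to\ell^2_1$, the rewritten equation $\tilde{\mathrm{V}}_K\mathbf{u}=\mathbf{q}$ with $\mathbf{q}\in\ell^2_1$ is equivalent to
$$ (\mathrm{T}[f]+\mathcal{K})\mathbf{u} = \mathrm{D}^{-1}\mathbf{q} $$
posed on $\ell^2$, where $\mathrm{D}^{-1}\mathbf{q}\in\ell^2$ because $\mathrm{D}^{-1}:\ell^2_1\to\ell^2$. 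The key bookkeeping point is that $\mathrm{D}$, being diagonal, commutes with every truncation, so $\mathcal{P}_n\tilde{\mathrm{V}}_K\mathcal{P}_n^\mathsf{T}$ factors as an invertible finite diagonal times $\mathcal{P}_n(\mathrm{T}[f]+\mathcal{K})\mathcal{P}_n^\mathsf{T}$; the two finite sections are therefore invertible together, and the convergence claim for the method reduces to the applicability of the finite section method to $\mathrm{T}[f]+\mathcal{K}$ on $\ell^2$.

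First I would establish invertibility of $\mathrm{T}[f]+\mathcal{K}$ and then invoke the finite section theory. The symbol $f(e^{i\theta})=\sum_{n=0}^M(\sum_{j=0}^n k_{nj})\cos^{2n}(\theta/2)$ is a real-valued trigonometric polynomial, hence continuous on $\mathbb{T}$, so $\mathrm{T}[f]$ is Fredholm exactly when $f$ is non-vanishing, which is the standing hypothesis. Because $f$ is real-valued and $\mathbb{T}$ is connected, a non-vanishing $f$ keeps a constant sign, its image lies in a half-line through the origin, and its winding number about $0$ is zero; consequently $\mathrm{T}[f]$, and hence the compact perturbation $\mathrm{T}[f]+\mathcal{K}$, is Fredholm of index zero. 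Well-posedness supplies injectivity of the Volterra map, and an injective Fredholm operator of index zero is invertible with bounded inverse. Moreover $f$ depends on $\theta$ only through $\cos\theta$, so the flipped symbol satisfies $\tilde f=f$ and $\mathrm{T}[f]$ is self-adjoint; this disposes of the auxiliary flipped-symbol condition that the finite section theory for Toeplitz-plus-compact operators requires in general. I would then apply the corresponding stability theorem (Böttcher–Silbermann \cite{bottcher_analysis_2006}): the sections $\mathcal{P}_n(\mathrm{T}[f]+\mathcal{K})\mathcal{P}_n^\mathsf{T}$ are eventually uniquely solvable and their solutions converge to $(\mathrm{T}[f]+\mathcal{K})^{-1}\mathrm{D}^{-1}\mathbf{q}=\mathbf{u}$, and transferring through $\mathrm{D}$ yields convergence of the method applied to $\tilde{\mathrm{V}}_K$.

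For the characterisation I would compute directly. Setting $y=x$ in the kernel gives $K(x,x)=\sum_{n=0}^M(\sum_{j=0}^n k_{nj})x^{n}$, whose coefficients are exactly those appearing in $f$, so that $f(e^{i\theta})=K(x,x)$ evaluated at $x=\cos^2(\theta/2)=\tfrac{1+\cos\theta}{2}$. As $z=e^{i\theta}$ traverses $\mathbb{T}$ the quantity $\tfrac{1+\cos\theta}{2}$ sweeps exactly the interval $[0,1]$, so $f$ is non-vanishing on $\mathbb{T}$ if and only if $K(x,x)\neq 0$ for every $x\in[0,1]$, which is the stated equivalence.

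I anticipate the main obstacle to be the correct application of the finite section theory for Toeplitz-plus-compact operators, which in general is governed not by invertibility of the operator alone but by an additional invertibility condition on the flipped symbol; the decisive structural observation that rescues the argument is that the symbol is real and even in $\theta$, forcing $\tilde f=f$ and Fredholm index zero, so that invertibility of $\mathrm{T}[f]+\mathcal{K}$—guaranteed by well-posedness together with the non-vanishing of $f$—already secures stability. A secondary point demanding care is the bookkeeping between the two Banach spaces $\ell^2$ and $\ell^2_1$, together with the fact that $\mathrm{D}$ is compact on $\ell^2$ yet an isomorphism onto $\ell^2_1$: this operator is precisely what encodes the smoothing, and hence the ill-posedness on $\ell^2$, inherent in the first-kind problem.
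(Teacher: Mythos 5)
Your proposal is correct and follows essentially the same route as the paper: factor $\tilde{\mathrm{V}}_K = \mathrm{D}(\mathrm{T}[f]+\mathcal{K})$ to move the problem onto $\ell^2$, use the real-valued, continuous, non-vanishing symbol to conclude the operator is Fredholm of index zero, combine this with injectivity (from well-posedness, or equivalently the absence of nonzero solutions to the homogeneous Volterra equation) to get invertibility, invoke finite-section theory for Toeplitz-plus-compact operators, and characterise the non-vanishing of $f$ via the substitution $x=\cos^2(\theta/2)$ sweeping $[0,1]$. If anything you are slightly more careful than the paper, which leaves the flipped-symbol stability condition implicit in its citations: your observation that $\tilde f = f$ (and that a real, continuous, non-vanishing symbol makes $\mathrm{T}[f]$ definite, hence itself invertible) closes that gap explicitly.
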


\begin{proof} The requirement $\mathbf{q} \in \ell^2_1$ arises formally due to the need to first invert $\mathrm{D}$ and can be understood as stemming from the inverse integration being a differentiation. The invertibility conditions of asymptotically Toeplitz operators of the form $(\mathrm{T}+\mathcal{K})$ are known in the literature (see e.g. \cite{hagen_c*-algebras_2001,bottcher_analysis_2006} and the references therein): A compactly perturbed Toeplitz operator on $\ell^2$ is invertible if it is a Fredholm operator, its index is $0$ and it has a trivial kernel \cite{grobler_operator_2010,bottcher_analysis_2006,hagen_c*-algebras_2001}. Furthermore, a compactly perturbed Toeplitz operator is Fredholm if its symbol (which is just the symbol of the Toeplitz part) does not vanish anywhere on the complex unit circle. 

In general, it holds that the index of a Toeplitz operator which is Fredholm is the sign-flipped winding number of its symbol on the complex unit disk \cite{bottcher_analysis_2006}. Since the symbol of the Toeplitz part of the unweighted Volterra operator is real-valued and continuous its index is thus 0 if and only if it does not vanish anywhere on the complex unit circle, which is a necessary condition for it to be Fredholm in the first place. Since $\cos^{2}\left(\frac{\theta}{2}\right) \in [0,1]$, the symbol vanishes at some point $\theta \in [0,2 \pi]$, i.e. 
$$\sum_{n=0}^M \sum_{j=0}^n k_{nj} \cos^{2n}\left(\frac{\theta}{2}\right) = 0,$$
if and only if for some $x \in [0,1]$ we have
$$\sum_{n=0}^M \sum_{j=0}^n k_{nj} x^n = 0.$$
This in turn is precisely the condition that $K(x,x) = 0$, since 
$$
K(x,y)=\sum_{n=0}^M \sum_{j=0}^n k_{nj} x^{n-j} y^{j}.
$$ Conversely, if $\forall x \in [0,1]: K(x,x)\neq 0$ then the Volterra operator is Fredholm because the symbol of its Toeplitz part has no roots on the unit circle and as this symbol is real valued its winding number and thus index is 0. This necessary condition for invertibility of the operator becomes a sufficient condition if in addition to this we have $\text{ker}(\mathrm{T}+\mathcal{K}) = \{ 0\}$, as this yields injectivity and via the index formula \cite{bottcher_analysis_2006}: $$\text{ind(T)}=\text{ind}(\mathrm{T}+\mathcal{K}) := \text{dim}(\text{ker}(\mathrm{T}+\mathcal{K})) - \text{dim}(\text{coker}(\mathrm{T}+\mathcal{K})),$$ with $\text{ind}(\mathrm{T}+\mathcal{K})=0$ also implies surjectivity. $\text{ker}(\mathrm{T}+\mathcal{K}) = \{ 0\}$ is a consequence of the classical result that the Volterra integral operator has no non-zero eigenvalues. The convergence of the method is then a consequence of known results in the theory of finite section methods, see e.g. \cite{hagen_c*-algebras_2001}.  \end{proof}

\medskip
\noindent{\bf Remark}: 
The motivation for solving $ \tilde{\mathrm{V}}_K \mathbf{u}=\mathbf{q}$ with $q(x) = \frac{g(x)}{1-x}$ instead of $ \mathrm{V}_K \mathbf{u}=\mathbf{g}$ directly can be understood at this point, since for $\mathrm{V}_K$ the symbol of the Toeplitz part is instead found to be $$\sum_{n=0}^M \sum_{j=0}^n k_{nj} \sin\left(\frac{\theta}{2}\right) \cos^{2n}\left(\frac{\theta}{2}\right),$$
which always has a root on the complex unit circle at $\theta = 0$ and thus its induced Toeplitz operator is not Fredholm and not invertible. Therefore the presented proof strategy only succeeds if $q(x) = \frac{g(x)}{1-x}$ may be used instead to get rid of the additional sine terms. The symbol of the Toeplitz part of $\tilde{\mathrm{V}}_K$ is comparably very well-behaved for a variety of kernels. 
\medskip

So far we have only been working with polynomial kernels of order $M$, henceforth denoted $K_M$, when it comes to Volterra equations of the first kind. We will need the following theorem (see \cite{atkinson_theoretical_2009,trogdon_riemann-hilbert_2016})  which we restate without proof for the extension of the above arguments to a non-polynomial kernel:

\begin{theorem}\label{theorem:HPbook}
Let $X$ and $Y$ be normed linear spaces with one or both being Banach spaces and let $\mathcal{T} : X \rightarrow Y$ be a bounded and invertible operator with $\mathcal{T}^{-1} : Y \rightarrow X$. Then if the bounded operator $\mathcal{M} : X \rightarrow Y$ satisfies
$$\| \mathcal{M} - \mathcal{T} \| < \frac{1}{ \| \mathcal{T}^{-1} \|},$$
it follows that $\mathcal{M}$ is also invertible with bounded inverse operator $\mathcal{M}^{-1}: Y \rightarrow X$ and
$$ \|\mathcal{M}^{-1} \| \leq \frac{\| \mathcal{T}^{-1} \|}{1- \| \mathcal{T}^{-1} \|\| \mathcal{T} - \mathcal{M} \|},  $$
$$ \|\mathcal{M}^{-1} - \mathcal{T}^{-1}\| \leq \frac{\| \mathcal{T}^{-1} \|^2 \| \mathcal{T} - \mathcal{M} \|}{1- \| \mathcal{T}^{-1} \|\| \mathcal{T} - \mathcal{M} \|}. $$
\end{theorem}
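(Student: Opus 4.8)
The plan is to reduce the statement to the convergence of a Neumann series, which is the standard route for perturbation results of this kind. First I would factor $\mathcal{M}$ through $\mathcal{T}$ and its inverse by writing
$$\mathcal{M} = \mathcal{T} - (\mathcal{T}-\mathcal{M}) = \mathcal{T}\left(\mathbb{1} - \mathcal{T}^{-1}(\mathcal{T}-\mathcal{M})\right),$$
and setting $A := \mathcal{T}^{-1}(\mathcal{T}-\mathcal{M})$, a bounded operator from $X$ to itself. The hypothesis $\|\mathcal{M}-\mathcal{T}\| < 1/\|\mathcal{T}^{-1}\|$ gives immediately $\|A\| \le \|\mathcal{T}^{-1}\|\,\|\mathcal{T}-\mathcal{M}\| < 1$, which is exactly the bound needed for the series to converge.

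The key step is then to invert $\mathbb{1}-A$ via the Neumann series $\sum_{k=0}^\infty A^k$. Provided $X$ is complete this series converges in operator norm, since the bounded operators on a Banach space form a Banach algebra, yielding $(\mathbb{1}-A)^{-1}$ with $\|(\mathbb{1}-A)^{-1}\| \le (1-\|A\|)^{-1} \le (1-\|\mathcal{T}^{-1}\|\|\mathcal{T}-\mathcal{M}\|)^{-1}$. Since $\mathcal{M} = \mathcal{T}(\mathbb{1}-A)$ is a composition of invertible maps it is invertible with $\mathcal{M}^{-1} = (\mathbb{1}-A)^{-1}\mathcal{T}^{-1}$, and submultiplicativity of the norm gives the first claimed bound directly. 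For the second bound I would use the algebraic identity $(\mathbb{1}-A)^{-1} - \mathbb{1} = A(\mathbb{1}-A)^{-1}$, so that $\mathcal{M}^{-1}-\mathcal{T}^{-1} = A(\mathbb{1}-A)^{-1}\mathcal{T}^{-1}$; estimating each factor and using $\|A\|\le\|\mathcal{T}^{-1}\|\|\mathcal{T}-\mathcal{M}\|$ then yields the stated inequality.

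The one place requiring care, and the only genuine subtlety, is the phrase \emph{one or both being Banach spaces}, since the Neumann series converges only when the relevant operator algebra is complete. If $X$ is the Banach space the argument above applies verbatim. If instead only $Y$ is complete, I would symmetrise the factorisation: set $B := (\mathcal{T}-\mathcal{M})\mathcal{T}^{-1} : Y \to Y$, note that $\mathcal{M} = (\mathbb{1}-B)\mathcal{T}$ and $\|B\| \le \|\mathcal{T}-\mathcal{M}\|\|\mathcal{T}^{-1}\| < 1$, invert $\mathbb{1}-B$ inside $\mathcal{B}(Y)$, and obtain $\mathcal{M}^{-1} = \mathcal{T}^{-1}(\mathbb{1}-B)^{-1}$. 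The norm estimates are identical after this relabelling. Apart from selecting the factorisation adapted to whichever space carries completeness, the remainder is routine manipulation of geometric-series bounds, so I expect no substantive obstacle beyond this bookkeeping.
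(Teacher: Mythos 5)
Your Neumann-series argument is correct and complete; this is the standard proof of the perturbation lemma. The paper deliberately states this theorem \emph{without proof}, citing Atkinson--Han and Trogdon--Olver where essentially the same geometric-series argument appears, so there is nothing in the paper to compare against beyond noting that your handling of the ``one or both Banach'' hypothesis --- choosing the factorisation $\mathcal{T}(\mathbb{1}-A)$ or $(\mathbb{1}-B)\mathcal{T}$ according to which space is complete --- is correct (and in fact the hypothesis is redundant, since a bounded isomorphism $\mathcal{T}$ with bounded inverse transports completeness from either space to the other).
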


\begin{lemma}
Given that $$ \| \tilde{\mathrm{V}}_{K_M} - \tilde{\mathrm{V}}_{K}\| \xrightarrow[M\rightarrow\infty]{}0$$ for a sequence of Volterra operators induced by polynomial kernels $K_M (x,y)$ and a not necessarily polynomial kernel $K(x,y)$, we have $$ \| \mathbf{u}_M - \mathbf{u} \| \xrightarrow[M\rightarrow\infty]{}0,$$ where $\mathbf{u}_M$ is the solution to the approximated problem
$$ \tilde{\mathrm{V}}_{K_M}\mathbf{u}_M = \mathbf{q}.$$
\end{lemma}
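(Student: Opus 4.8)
The plan is to obtain the result as a direct consequence of the Banach space perturbation estimate in Theorem~\ref{theorem:HPbook}, applied on the pair of spaces $X = \ell^2$ and $Y = \ell^2_1$ with the bounded invertible operator taken to be the limit operator $\mathcal{T} = \tilde{\mathrm{V}}_K$ and the perturbed operator taken to be $\mathcal{M} = \tilde{\mathrm{V}}_{K_M}$. Recall that all of these operators map $\ell^2 \to \ell^2_1$, that $\mathbf{q} \in \ell^2_1$, and that the relevant solutions are $\mathbf{u} = \tilde{\mathrm{V}}_K^{-1}\mathbf{q}$ and $\mathbf{u}_M = \tilde{\mathrm{V}}_{K_M}^{-1}\mathbf{q}$, so the norm appearing in the hypothesis is understood as the $\ell^2 \to \ell^2_1$ operator norm.

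Before invoking Theorem~\ref{theorem:HPbook} I first have to know that the limit operator $\tilde{\mathrm{V}}_K$ is itself invertible, so that $\mathbf{u} = \tilde{\mathrm{V}}_K^{-1}\mathbf{q}$ is well defined; this is the step I expect to be the main obstacle, since for a genuinely non-polynomial kernel one cannot read invertibility off a finite double sum. The resolution is that the Toeplitz-symbol argument preceding the first-kind theorem uses only the continuity of the symbol via relation~(\ref{eq:contsymbols}), so $\tilde{\mathrm{V}}_K$ remains a compact perturbation of a Toeplitz operator whose symbol is continuous and inherits from $K$ the property of vanishing on the complex unit circle if and only if $K(x,x)=0$ for some $x\in[0,1]$. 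Under the standing well-posedness hypothesis $K(x,x)\neq 0$ on $[0,1]$ the same reasoning as in the first-kind theorem---Fredholmness from the nonvanishing symbol, index zero from the real-valued symbol, and trivial kernel from the classical fact that the Volterra operator has no nonzero eigenvalues---then yields invertibility of $\tilde{\mathrm{V}}_K$.

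With invertibility of $\mathcal{T} = \tilde{\mathrm{V}}_K$ in hand, the remaining steps are routine. By hypothesis $\|\tilde{\mathrm{V}}_{K_M} - \tilde{\mathrm{V}}_K\| \to 0$, so for all sufficiently large $M$ we have $\|\mathcal{M} - \mathcal{T}\| < 1/\|\mathcal{T}^{-1}\|$, and Theorem~\ref{theorem:HPbook} then guarantees that $\mathcal{M} = \tilde{\mathrm{V}}_{K_M}$ is invertible with
$$ \|\tilde{\mathrm{V}}_{K_M}^{-1} - \tilde{\mathrm{V}}_K^{-1}\| \leq \frac{\|\tilde{\mathrm{V}}_K^{-1}\|^2 \, \|\tilde{\mathrm{V}}_K - \tilde{\mathrm{V}}_{K_M}\|}{1 - \|\tilde{\mathrm{V}}_K^{-1}\| \, \|\tilde{\mathrm{V}}_K - \tilde{\mathrm{V}}_{K_M}\|}. $$
Finally I would write $\mathbf{u}_M - \mathbf{u} = (\tilde{\mathrm{V}}_{K_M}^{-1} - \tilde{\mathrm{V}}_K^{-1})\mathbf{q}$ and estimate $\|\mathbf{u}_M - \mathbf{u}\|_{\ell^2} \leq \|\tilde{\mathrm{V}}_{K_M}^{-1} - \tilde{\mathrm{V}}_K^{-1}\|\,\|\mathbf{q}\|_{\ell^2_1}$. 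Since $\mathbf{q}$ is fixed and the right-hand side of the displayed bound tends to $0$ as $\|\tilde{\mathrm{V}}_K - \tilde{\mathrm{V}}_{K_M}\| \to 0$, this yields $\|\mathbf{u}_M - \mathbf{u}\| \to 0$ as claimed.
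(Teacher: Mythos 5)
Your proposal is correct and follows essentially the same route as the paper: both apply Theorem~\ref{theorem:HPbook} with $\mathcal{T}=\tilde{\mathrm{V}}_K$ and $\mathcal{M}=\tilde{\mathrm{V}}_{K_M}$, use the hypothesis to ensure $\|\mathcal{M}-\mathcal{T}\|<1/\|\mathcal{T}^{-1}\|$ for large $M$, and conclude via the resulting bound on $\|\tilde{\mathrm{V}}_{K_M}^{-1}-\tilde{\mathrm{V}}_K^{-1}\|$ applied to the fixed datum $\mathbf{q}$. Your additional paragraph justifying the invertibility of the limit operator $\tilde{\mathrm{V}}_K$ via the Toeplitz-symbol argument is a point the paper leaves implicit, and is a welcome clarification rather than a departure.
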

\begin{proof}
The method can be extended to more general $K=K(x,y)$ if $K_M$ is interpreted as the polynomial approximation of order $M$ of the full kernel $K$. To show that the method can be extended sensibly to non-polynomial kernels what remains to be shown is that $ \| \mathbf{u}_M - \mathbf{u} \| \xrightarrow[M\rightarrow\infty]{}0.$ This can be achieved by use of Theorem \ref{theorem:HPbook}: The assumptions of the theorem are satisfied when setting $\mathcal{T} =  \tilde{\mathrm{V}}_{K}$ and $\mathcal{M} =  \tilde{\mathrm{V}}_{K_M}$ since if $ \| \tilde{\mathrm{V}}_{K_M} - \tilde{\mathrm{V}}_{K}\| \xrightarrow[M\rightarrow\infty]{}0$ then for some $M$ all subsequent $\tilde{\mathrm{V}}_{K_M}$ satisfy $$ \| \tilde{\mathrm{V}}_{K_M} - \tilde{\mathrm{V}}_{K}\| < \frac{1}{\| \tilde{\mathrm{V}}_{K}^{-1} \|}.$$
This immediately yields invertibility of $\tilde{\mathrm{V}}_{K_M}$ and more importantly the desired result that
$$ \| \tilde{\mathrm{V}}_{K_M}^{-1} - \tilde{\mathrm{V}}_K^{-1} \| < \frac{\| \tilde{\mathrm{V}}^{-1} \|^2 \| \tilde{\mathrm{V}}_{K_M} - \tilde{\mathrm{V}}_{K}\| }{1-\| \tilde{\mathrm{V}}^{-1} \|\| \tilde{\mathrm{V}}_{K_M} - \tilde{\mathrm{V}}_{K}\|}\xrightarrow[M\rightarrow\infty]{}0 $$
which justifies calling the solution $\mathbf{u}_M= \tilde{\mathrm{V}}_{K_M}^{-1} \mathbf{q}$ an approximation to $\mathbf{u}= \tilde{\mathrm{V}}_{K}^{-1} \mathbf{q}$.
\end{proof}

\section{Discussion} \label{sec:discussion}
The method proposed in this paper can efficiently compute Volterra integrals as well as solve Volterra integral equations of the first and second kind with high accuracy using bivariate orthogonal polynomials to resolve the kernel along with an operator valued Clenshaw algorithm and is not restricted to convolution kernels. Numerical experiments suggest it can even be applicable to certain singular equations. Our approach takes advantage of the sparsity of the required integration and extension operators which are due to the symmetries of the Jacobi polynomial basis on the triangle domain. The method was shown to converge for well-posed Volterra integral equations of the first and second kind, using a link to compact perturbations of Toeplitz operators. 

Extensions of this approach to various so-called integro-differential equations of Volterra type, where both differentiation and Volterra operators act on the unknown function, as well as extensions to non-linear Volterra equations, where the unknown function can appear in non-linear fashion in the Volterra integral, while non-trivial are conceivable and will be addressed in future works.

\section*{Acknowledgments}

We thank Nick Hale and Kuan Xu for crucial help on Volterra integral equations at the initial stages of this project.  We thank Mikael Slevinsky for thoroughly reading a draft and providing detailed comments.

\bibliographystyle{plain}
\bibliography{citations}

\begin{thebibliography}{10}

\bibitem{akyuz-dascioglu_chebyshev_2006}
A.~Aky{\"u}z-Da{\c s}cıo{\u g}lu.
\newblock A {Chebyshev} polynomial approach for linear {Fredholm}--{Volterra}
  integro-differential equations in the most general form.
\newblock {\em Appl. Math. Comput.}, 181(1):103--112, October 2006.

\bibitem{allaei_existence_2015}
S.~S. Allaei, Z.~Yang, and H.~Brunner.
\newblock Existence, uniqueness and regularity of solutions to a class of
  third-kind {Volterra} integral equations.
\newblock {\em J. Integral Equ. Appl.}, 27(3):325--342, September 2015.

\bibitem{allaei_collocation_2017}
S.~S. Allaei, Z.~Yang, and H.~Brunner.
\newblock Collocation methods for third-kind {VIEs}.
\newblock {\em IMA J. Num. Ana.}, 37(3):1104--1124, July 2017.

\bibitem{atkinson_theoretical_2009}
K.~E. Atkinson and W.~Han.
\newblock {\em Theoretical {Numerical analysis}: {A} {Functional} {Analysis}
  {Framework}}.
\newblock Number~39 in Texts in applied mathematics. Springer, Dordrecht ; New
  York, 3rd ed edition, 2009.

\bibitem{babolian_direct_2008}
E.~Babolian and Z.~Masouri.
\newblock Direct method to solve {Volterra} integral equation of the first kind
  using operational matrix with block-pulse functions.
\newblock {\em J. Comput. Appl. Math.}, 220(1):51--57, October 2008.

\bibitem{bachman_functional_2000}
G.~Bachman and L.~Narici.
\newblock {\em Functional {Analysis}}.
\newblock Dover Publications, Mineola, N.Y, 2000.

\bibitem{baratella_nystrom_2009}
P.~Baratella.
\newblock A {Nystr{\"o}m} interpolant for some weakly singular linear
  {Volterra} integral equations.
\newblock {\em J. Comput. Appl. Math.}, 231(2):725--734, September 2009.

\bibitem{beks2017}
J.~Bezanson, A.~Edelman, S.~Karpinski, and V.~B. Shah.
\newblock Julia: A fresh approach to numerical computing.
\newblock {\em SIAM Rev.}, 59(1):65--98, 2017.

\bibitem{bttcher_introduction_1999}
A.~B\"ottcher and B.~Silbermann.
\newblock {\em Introduction to {Large} {Truncated} {Toeplitz} {Matrices}}.
\newblock Springer, New York, 1999.

\bibitem{bottcher_analysis_2006}
A.~B\"ottcher, B.~Silbermann, and A.~Karlovich.
\newblock {\em Analysis of {Toeplitz} {Operators}}.
\newblock Springer monographs in mathematics. Springer, Berlin, 2. ed edition,
  2006.
\newblock OCLC: 181538992.

\bibitem{brunner_2004}
H.~Brunner.
\newblock {\em Collocation {Methods} for {Volterra} {Integral} and {Related}
  {Functional} {Differential} {Equations}}.
\newblock Cambridge Monographs on Applied and Computational Mathematics.
  Cambridge University Press, 2004.

\bibitem{brunner_2017}
H.~Brunner.
\newblock {\em Volterra {Integral} {Equations}: {An} {Introduction} to {Theory}
  and {Applications}}.
\newblock Cambridge Monographs on Applied and Computational Mathematics.
  Cambridge University Press, 2017.

\bibitem{casazza_finite_2013}
P.~G. Casazza and G.~Kutyniok, editors.
\newblock {\em Finite {Frames}: {Theory} and {Applications}}.
\newblock Applied and numerical harmonic analysis. Springer, New York, 2013.
\newblock OCLC: ocn820820998.

\bibitem{christensen_introduction_2003}
O.~Christensen.
\newblock {\em An {Introduction} to {Frames} and {Riesz} {Bases}}.
\newblock Applied and {Numerical} {Harmonic} {Analysis}. Birkh{\"a}user Boston,
  Boston, MA, 2003.

\bibitem{clenshaw_note_1955}
C.~W. Clenshaw.
\newblock A note on the summation of {Chebyshev} series.
\newblock {\em Math. Comput.}, 9(51):118--118, September 1955.

\bibitem{diogo_numerical_2006}
T.~Diogo, N.~J. Ford, P.~Lima, and S.~Valtchev.
\newblock Numerical methods for a {Volterra} integral equation with non-smooth
  solutions.
\newblock {\em J. Comput. Appl. Math.}, 189(1-2):412--423, May 2006.

\bibitem{diogo_high_2004}
T.~Diogo, N.~Franco, and P.~Lima.
\newblock High order product integration methods for a {Volterra} integral
  equation with logarithmic singular kernel.
\newblock {\em Comm. P. and Appl. Ana.}, 3(2):217--235, March 2004.

\bibitem{diogo_hermite-type_1991}
T.~Diogo, S.~Mckee, and T.~Tang.
\newblock A {Hermite}-{Type} {Collocation} {Method} for the {Solution} of an
  {Integral} {Equation} with a {Certain} {Weakly} {Singular} {Kernel}.
\newblock {\em IMA J. Num. Ana.}, 11(4):595--605, 1991.

\bibitem{dunkl_orthogonal_2014}
C.~F. Dunkl and Y.~Xu.
\newblock {\em Orthogonal {Polynomials} of {Several} {Variables}}.
\newblock Number 155 in Encyclopedia of mathematics and its applications.
  Cambridge University Press, Cambridge, second edition, 2014.

\bibitem{gautschi_orthogonal_2004}
W.~Gautschi.
\newblock {\em Orthogonal {Polynomials}: {Computation} and {Approximation}}.
\newblock Numerical mathematics and scientific computation. Oxford University
  Press, Oxford ; New York, 2004.
\newblock OCLC: ocm55622265.

\bibitem{grobler_operator_2010}
J.~J. Grobler, L.~E. Labuschagne, and M.~M{\"o}ller, editors.
\newblock {\em Operator {Algebras}, {Operator} {Theory} and {Applications}}.
\newblock Birkh{\"a}user Basel, Basel, 2010.

\bibitem{hagen_c*-algebras_2001}
R.~Hagen, S.~Roch, and B.~Silbermann.
\newblock {\em C*-{Algebras} and {Numerical} {Analysis}}.
\newblock Number 236 in Chapman \& {Hall}/{CRC} {Pure} and {Applied}
  {Mathematics}. CRC Press, Taylor \& Francis Group, New York, 2001.

\bibitem{hale_ultraspherical_2018}
N.~Hale.
\newblock An ultraspherical spectral method for linear {Fredholm} and
  {Volterra} integro-differential equations of convolution type.
\newblock {\em IMA J. Num. Ana.}, July 2018.

\bibitem{koroglu_chebyshev_1998}
H.~K{\"o}ro{\u g}lu.
\newblock Chebyshev series solution of linear {Fredholm} integrodifferential
  equations.
\newblock {\em Int. J. Math. Educ. Sci. Technol.}, 29(4):489--500, July 1998.

\bibitem{krimer_non-markovian_2014}
D.~O. Krimer, S.~Putz, J.~Majer, and S.~Rotter.
\newblock Non-{Markovian} dynamics of a single-mode cavity strongly coupled to
  an inhomogeneously broadened spin ensemble.
\newblock {\em Phys. Rev. A}, 90(4), October 2014.

\bibitem{krimer_sustained_2016}
D.~O. Krimer, M.~Zens, S.~Putz, and S.~Rotter.
\newblock Sustained photon pulse revivals from inhomogeneously broadened spin
  ensembles: {Sustained} photon pulse revivals from inhomogeneously broadened
  spin ensembles.
\newblock {\em Laser Photonics Rev.}, 10(6):1023--1030, November 2016.

\bibitem{lindenstrauss_classical_1996}
J.~Lindenstrauss and L.~Tzafriri.
\newblock {\em Classical {Banach} {Spaces}}.
\newblock Classics in mathematics. Springer, Berlin, 1996.
\newblock OCLC: 180453042.

\bibitem{lintner_generalized_2015}
S.~K. Lintner and O.~P. Bruno.
\newblock A generalized {Calder{\'o}n} formula for open-arc diffraction
  problems: theoretical considerations.
\newblock {\em P. Roy. Soc. Edinb. A}, 145(2):331--364, April 2015.

\bibitem{loureiro_volterra-type_2019}
A.~Loureiro and K.~Xu.
\newblock Volterra-type convolution of classical polynomials.
\newblock {\em Math. Comput.}, January 2019.

\bibitem{maleknejad_numerical_2005}
K.~Maleknejad and N.~Aghazadeh.
\newblock Numerical solution of {Volterra} integral equations of the second
  kind with convolution kernel by using {Taylor}-series expansion method.
\newblock {\em Appl. Math. Comput.}, 161(3):915--922, 2005.

\bibitem{muscat_functional_2014}
J.~Muscat.
\newblock {\em Functional {Analysis} {An} {Introduction} to {Metric} {Spaces},
  {Hilbert} {Spaces}, and {Banach} {Algebras}}.
\newblock Springer International Publishing, Cham, 2014.
\newblock OCLC: 980961885.

\bibitem{nist_2018}
F.W.J. Olver, A.B.O. Daalhuis, D.W. Lozier, B.I. Schneider, R.F. Boisvert, C.W.
  Clark, B.R. Miller, and B.~V. Saunders~(eds.).
\newblock {NIST} {Digital} {Library} of {Mathematical} {Functions}, December
  2018.
\newblock http://dlmf.nist.gov.

\bibitem{olver_fast_2013}
S.~Olver and A.~Townsend.
\newblock A fast and well-conditioned spectral method.
\newblock {\em SIAM Rev.}, 55(3):462--489, January 2013.

\bibitem{olver_practical_2014}
S.~Olver and A.~Townsend.
\newblock A practical framework for infinite-dimensional linear algebra.
\newblock In {\em 2014 {First} {Workshop} for {High} {Performance} {Technical}
  {Computing} in {Dynamic} {Languages}}, pages 57--62, LA, USA, November 2014.
  IEEE.

\bibitem{olver_recurrence_2019}
S.~Olver, A.~Townsend, and G.~Vasil.
\newblock Recurrence relations for orthogonal polynomials on a triangle.
\newblock In {\em ICOSAHOM 2018}, 2019.

\bibitem{olver_sparse_2019}
S.~Olver, A.~Townsend, and G.~Vasil.
\newblock A sparse spectral method on triangles.
\newblock {\em arXiv:1902.04863}, February 2019.

\bibitem{pruss_evolutionary_2012}
J.~Pr\"{u}ss.
\newblock {\em Evolutionary {Integral} {Equations} and {Applications}}.
\newblock Modern {Birkh}\"{a}user classics. Springer, Basel ; New York, 2012.
\newblock OCLC: ocn796763028.

\bibitem{slevinsky_conquering_2017}
R.~M. Slevinsky.
\newblock Conquering the pre-computation in two-dimensional harmonic polynomial
  transforms.
\newblock {\em arXiv:1711.07866}, November 2017.

\bibitem{slevinsky_fast_2017}
R.~M. Slevinsky.
\newblock Fast and backward stable transforms between spherical harmonic
  expansions and bivariate {Fourier} series.
\newblock {\em Appl. Comput. Harmon. Anal.}, November 2017.

\bibitem{slevinsky_fasttransforms_2019}
R.~M. Slevinsky.
\newblock {FastTransforms} v0.1.1, January 2019.
\newblock original-date: 2018-03-15T23:11:52Z.

\bibitem{slevinsky_singular_2017}
R.~M. Slevinsky and S.~Olver.
\newblock A fast and well-conditioned spectral method for singular integral
  equations.
\newblock {\em J. Comput. Phys.}, 332:290--315, March 2017.

\bibitem{song_analysis_2019}
H.~Song, Z.~Yang, and H.~Brunner.
\newblock Analysis of collocation methods for nonlinear {Volterra} integral
  equations of the third kind.
\newblock {\em Calcolo}, 56(1):7, January 2019.

\bibitem{townsend_automatic_2015}
A.~Townsend and S.~Olver.
\newblock The automatic solution of partial differential equations using a
  global spectral method.
\newblock {\em J. Comput. Phys.}, 299:106--123, October 2015.

\bibitem{trogdon_riemann-hilbert_2016}
T.D. Trogdon and S.~Olver.
\newblock {\em Riemann-{Hilbert} {Problems}, {Their} {Numerical} {Solution},
  and the {Computation} of {Nonlinear} {Special} {Functions}}.
\newblock Number 146 in Other titles in applied mathematics. SIAM, Society for
  Industrial and Applied Mathematics, Philadelphia, 2016.

\bibitem{van_den_bosch_pandemics_1999}
F.~van~den Bosch, J.~A.~J. Metz, and J.~C. Zadoks.
\newblock Pandemics of {Focal} {Plant} {Disease}, a {Model}.
\newblock {\em Phytopathology}, 89(6):495--505, June 1999.

\bibitem{vasil_tensor_2016}
G.~M. Vasil, K.~J. Burns, D.~Lecoanet, S.~Olver, B.~P. Brown, and J.~S. Oishi.
\newblock Tensor calculus in polar coordinates using {Jacobi} polynomials.
\newblock {\em J. Comput. Phys.}, 325:53--73, November 2016.

\bibitem{wazwaz_linear_2011}
A.~Wazwaz.
\newblock {\em Linear and {Nonlinear} {Integral} {Equations}: {Methods} and
  {Applications}}.
\newblock Higher Education Press; Springer, Beijing, Heidelberg, New York,
  2011.

\bibitem{wazwaz_two_2016}
A.~Wazwaz and R.~Rach.
\newblock Two reliable methods for solving the {Volterra} integral equation
  with a weakly singular kernel.
\newblock {\em J. Comput. Appl. Math.}, 302:71--80, August 2016.

\bibitem{xu_fast_2017}
K.~Xu, A.~P. Austin, and K.~Wei.
\newblock A {Fast} {Algorithm} for the {Convolution} of {Functions} with
  {Compact} {Support} {Using} {Fourier} {Extensions}.
\newblock {\em SIAM J. Sci. Comput.}, 39(6):A3089--A3106, January 2017.

\bibitem{xu_spectral_2018}
K.~Xu and A.~Loureiro.
\newblock Spectral {Approximation} of {Convolution} {Operators}.
\newblock {\em SIAM J. Sci. Comput.}, 40(4):A2336--A2355, January 2018.

\end{thebibliography}

\end{document}